\newcommand{\mo}[1]{(\text{mod }#1)}
\titleformat{\section}{\bfseries\centering}{\thesection \hspace{0.5cm}}{12pt}{}
\titleformat{\subsection}{\bfseries}{\thesubsection \hspace{0.5cm}}{12pt}{}
\newtheorem{theorem}{Theorem}[section]
\newtheorem{proposition}{Proposition}[section]
\newtheorem{corollary}{Corollary}[section]
\newtheorem{definition}{Definition}[section]
\newtheorem{question}{Question}[section]
\newtheorem{conjecture}{Conjecture}[section]
\begin{document}\setlength{\parindent}{0cm}\thispagestyle{empty}
	\begin{center}
		\Large \textbf{
			{$(a,b)$-Fibonacci-Legendre Cordial Graphs and\\ $k$-Pisano-Legendre Primes}}
	\end{center}
	\vspace{2mm}
	Jason D. Andoyo
	
	University of Southeastern Philippines, Davao City, Republic of the Philippines

	\section*{Abstract}\small
	\hspace{1cm}Let $p$ be an odd prime and let $F_i$ be the $i$th $(a,b)$-Fibonacci number with initial values $F_0=a$ and $F_1=b$. For a simple connected graph $G=(V,E)$, define a bijective function $f:V(G)\to \{0,1,\ldots,|V|-1\}$. If the induced function $f_p^*:E(G)\to \{0,1\}$, defined by $f_p^*(uv)=\frac{1+([F_{f(u)}+F_{f(v)}]/p)}{2}$ whenever $F_{f(u)}+F_{f(v)}\not\equiv 0\mo{p}$ and $f_p^*(uv)=0$ whenever $F_{f(u)}+F_{f(v)}\equiv 0\mo{p}$, satisfies the condition $|e_{f_p^*}(0)-e_{f_p^*}(1)|\leq 1$ where $e_{f_p^*}(i)$ is the number of edges labeled $i$ ($i=0,1$), then $f$ is called $(a,b)$-Fibonacci-Legendre cordial labeling modulo $p$. In this paper, the $(a,b)$-Fibonacci-Legendre cordial labeling of path graphs, star graphs, wheel graphs, and graphs under the operations join, corona, lexicographic product, cartesian product, tensor product, and strong product is explored in relation to $k$-Pisano-Legendre primes relative to $(a,b)$. We also present some properties of $k$-Pisano-Legendre primes relative to $(a,b)$ and numerical observations on its distribution, leading to several conjectures concerning their density and growth behavior.
	
	\textbf{Keywords:} odd prime; $(a,b)$-Fibonacci-Legendre cordial labeling; $k$-Pisano-Legendre primes
	
	\textbf{2020 MSC:} 05C76, 05C78, 11A07, 11A15, 11A41, 11B39
	\normalsize
	
	\setlength{\parindent}{1cm}
	\section{Introduction}
	
	\hspace{1cm}A graph $G = (V, E)$ has a \textit{vertex set} $V = V(G)$ and an \textit{edge set} $E = E(G)$. The elements of $V$ and $E$ are called \textit{vertices} and \textit{edges}, respectively. If $G$ has $n$ vertices and $m$ edges, then $G$ has \textit{order} $n$ and \textit{size} $m$. 
	
	The \textit{Fibonacci sequence}, introduced in the 13th century by Leonardo of Pisa (Fibonacci) in his study of idealized rabbit populations, has since become one of the most celebrated sequences in mathematics due to its elegant recursive structure and deep arithmetic pro\-perties. In this paper, we define the \textit{$(a,b)$-Fibonacci sequence} that satisfies the following recursive relation: 
	$$F_0=a,\quad F_1=b, \quad F_{n}=F_{n-2}+F_{n-1}\text{ for $n\geq 2$},$$ where $F_0=a$ and $F_1=b$ are called initial values. Throughout this paper, we assume that $(a,b)\neq (0,0)$. Hence, $(0,1)$-Fibonacci sequence is the classical Fibonacci sequence and $(2,1)$-Fibonacci sequence is the classical Lucas sequence.
	
	The \textit{Legendre symbol}, introduced by Adrien-Marie Legendre in the late 18th century, serves as a compact notation for determining whether an integer is a quadratic residue of an odd prime $p$, taking the value  
	$$
	(a/p) = 
	\begin{cases} 
		1 & \text{if $a$ is a quadratic residue of $p$},\\[2mm]
		-1 & \text{if $a$ is a quadratic nonresidue of $p$},\\[1mm]
		0 & \text{if $a\equiv 0\mo{p}$}.
	\end{cases}
	$$  
	One important property of the Legendre symbol is that if $a$ and $b$ are integers and $a\equiv b\mo{p}$, then $(a/p)=(b/p)$. 
	
	These two classical mathematical concepts play important roles across number theory: Fibonacci numbers appear in combinatorics, algebra, and modular recurrences, while the Legendre symbol underlies quadratic reciprocity, primality tests, and the structure of congruences. Together, their interaction gives rise to intriguing problems involving periodicity and arithmetic behavior modulo primes.
	
	Another mathematical concept is \textit{graph labeling}, which involves assigning labels, usually numbers, to the vertices or edges of a graph according to specific conditions and has applications in combinatorics, network theory, and the study of structural patterns \cite{Gallian}. One important type is \textit{cordial labeling}, introduced by Ibrahim Cahit \cite{Cahit}, which aims to balance the distribution of labels across vertices and edges. This concept led to the creation of Legendre cordial labeling \cite{Andoyo3}, which is the first graph labeling that explicitly uses the term \textit{modulo} in its labeling function, namely modulo an odd prime $p$. This was later extended by Euler cordial labeling \cite{Andoyo1}, Legendre product cordial labeling \cite{Prudencio}, and logarithmic cordial labeling \cite{Andoyo2}, where different concepts of Number Theory have been used such as Euler's Theorem, multiplicative property of the Legendre symbol, and discrete logarithm (indices). Moreover, cordial labeling also motivated the notions of Fibonacci cordial labeling \cite{Sulayman} and Lucas cordial labeling \cite{Salise}, where the concepts of the Fibonacci sequence and the Lucas sequence are used in terms of parity conditions. All the above-mentioned variants of cordial labeling eventually led to the discovery of $(a,b)$-Fibonacci-Legendre cordial labeling, which uses the $(a,b)$-Fibonacci sequence and the Legendre symbol, and whose labeling function operates modulo an odd prime $p$.
	
	Prime numbers play a fundamental role in number theory and its applications. The importance of primes is highlighted by the Fundamental Theorem of Arithmetic, which states that every integer greater than $1$ can be expressed uniquely (up to ordering) as a product of prime numbers. Moreover, Euclid proved that there are infinitely many prime numbers, establishing their inexhaustible nature. Over time, several notable families of primes have been studied due to their special algebraic forms and properties, including Mersenne primes \cite{Toledo} of the form $2^{p}-1$, where $p$ is prime, Fermat primes \cite{Brandli} of the form $2^{2^{n}}+1$, and Sophie Germain primes \cite{Li}, where a prime $p$ satisfies that $2p+1$ is also prime. These special classes illustrate how primes naturally arise from arithmetic structures. Motivated by these concepts, a new family of primes, called \textit{$k$-Pisano-Legendre primes relative to $(a,b)$}, is introduced in this study. This family is defined through the interaction of the $(a,b)$-Fibonacci sequence and its corresponding $(a,b)$-Pisano period of the given $k$-Pisano-Legendre prime (see Definition~\ref{PL}).

	The present work is primarily constructive in nature. In particular, we investigate the $(a,b)$-Fibonacci-Legendre cordial labeling on several standard families of graphs and under common graph operations. In addition, we examine several properties of $k$-Pisano-Legendre primes relative to $(a,b)$ and present some numerical observations. These findings lead to the formulation of conjectures that highlight emerging patterns and open directions for further research.
	\section{Basic Concepts}
	\begin{definition}\normalfont
		A \textit{path graph} $P_n$ of order $n$ and size $n-1$ is a graph with vertex set $V(P_n)=\{v_1,v_2,\ldots,v_n\}$ and edge set $E(P_n)=\{v_1v_2,v_2v_3,\ldots,v_{n-1}v_n\}$. In addition, a \textit{cycle graph} $C_n$ of order $n$ and size $n$ is obtained from path graph $P_n$ with an additional edge $v_1v_n$.
	\end{definition}
	
	\begin{definition}\normalfont
		A \textit{star graph} $S_n$ of order $n$ and size $n-1$ is a graph with vertex set $V(S_n)=\{v_1,v_2,\ldots,v_{n-1}\}\cup \{x_0\}$ and edge set $E(S_n)=\{v_ix_0:i=1,2,\ldots,n-1\}$, where $x_0$ is called a \textit{central vertex} of $S_n$.
	\end{definition}
	
	\begin{definition}\normalfont
		A \textit{wheel graph} $W_n$ of order $n$ is obtained from a cycle graph $C_{n-1}$ such that every vertex of $C_{n-1}$ is adjacent to a new vertex $x_0$
	\end{definition}
	
	\begin{definition}\normalfont
		Let $G^1,G^2,\ldots,G^n$ be graphs. Then the \textit{union graph} of $G^1,G^2,\ldots,G^n$, denoted by $\bigcup_{i=1}^nG^i=G^1\cup G^2\cup \cdots\cup G^n,$ 
		is a graph with vertex set
		$$V(G^1)\cup V(G^2)\cup\cdots\cup V(G^n)$$
		and edge set
		$$E(G^1)\cup E(G^2)\cup\cdots\cup E(G^n).$$
	\end{definition}
	
	\begin{definition}\normalfont
		Let $G_1$ and $G_2$ be graphs. Note that the vertex set of the graphs defined in items (iii) to (vi) is $V(G_1)\times V(G_2)$.
		\begin{enumerate}
			\item [i.] A \textit{join graph} $G_1+G_2$ is a graph with vertex set 
			\begin{equation*}
				V(G_1+G_2)=V(G_1)\cup V(G_2)
			\end{equation*}
			and edge set
			\begin{equation*}
				E(G_1+G_2)=E(G_1)\cup E(G_2)\cup \{uv:u\in V(G_1)\text{ and }v\in V(G_2)\}.
			\end{equation*}
			\item[ii.] If $G_1$ has order $n$, then the \textit{corona graph} $G_1\circ G_2$ is obtained by taking one copy of $G_1$ and $n$ copies of $G_2$, and then joining the $i$th vertex of $G_1$ to every vertex of the $i$th copy of $G_2$. 
			\item[iii.] A \textit{lexicographic product graph} $G_1\otimes G_2$ is a graph with edge set
			\begin{align*}
				E(G_1\otimes G_2)&=\{(v_1,u_1)(v_2,u_2): v_1v_2\in E(G_1),\\
				&\text{\hspace{0.5cm} or }v_1=v_2 \text{ and }u_1u_2\in E(G_2)\}.
			\end{align*}
			\item[iv.] A \textit{cartesian product graph} $G_1\square G_2$ is a graph with edge set
			\begin{align*}
				E(G_1\square G_2)&=\lbrace(v_1,u_1)(v_2,u_2): v_1=v_2\text{ and }u_1u_2\in E(G_2),\\
				&\text{\hspace{0.5cm}}\text{ or } u_1=u_2\text{ and }v_1v_2\in E(G_1)\rbrace.
			\end{align*}
			\item[v.] A \textit{tensor product graph} $G_1\times G_2$ is a graph with edge set
			\begin{equation*}
				E(G_1\times G_2)=\{(v_1,u_1)(v_2,u_2): v_1v_2\in E(G_1)\text{ and }u_1u_2 \in E(G_2)\}.
			\end{equation*}
			\item[vi.] A \textit{strong product graph} $G_1\boxtimes G_2$ is a graph with edge set
			\begin{equation*}
				E(G_1\boxtimes G_2)=E(G_1\square G_2)\cup E(G_1\times G_2).
			\end{equation*}
		\end{enumerate}
	\end{definition}
	
	Consider the path graph $P_3$ and cycle graph $C_3$ shown in Figures~\ref{P3} and \ref{C3}, respectively. Thus, the union graph $C_3\cup P_3$, join graph $C_3+P_3$, corona graph $C_3\circ P_3$, lexicographic product graph $C_3\otimes P_3$, cartesian product graph $C_3\square P_3$, tensor product graph $C_3\times P_3$, and strong product graph $C_3\boxtimes P_3$ are illustrated in Figures~\ref{union}, \ref{join}, \ref{corona}, \ref{lex}, \ref{cart}, \ref{ten}, and \ref{str}, respectively.
	
	\vspace{0.5cm}
	
	\begin{figure}[hbt!]\centering
		\begin{tikzpicture}[scale=1.2]
			\node[circle,draw,scale=0.4,label=90:$v_1$] (v1) at (0,0) {};
			\node[circle,draw,scale=0.4,label=90:$v_2$] (v2) at (1,0) {};
			\node[circle,draw,scale=0.4,label=90:$v_3$] (v3) at (2,0) {};
			\draw[-] (v1)--(v2)--(v3);
			\node[rectangle] () at (-1,0) {$P_3:$};
		\end{tikzpicture}
		\caption{Path graph $P_3$}\label{P3}
	\end{figure}
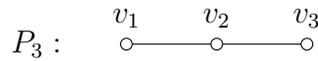

	\begin{figure}[hbt!]\centering
		\begin{tikzpicture}[scale=1.2]
			\node[circle,draw,scale=0.4,label=180:$u_1$] (v1) at (0,0) {};
			\node[circle,draw,scale=0.4,label=90:$u_2$] (v2) at (1,1) {};
			\node[circle,draw,scale=0.4,label=0:$u_3$] (v3) at (2,0) {};
			\draw[-] (v1)--(v2)--(v3)--(v1);
			\node[rectangle] () at (-1,0.5) {$C_3:$};
		\end{tikzpicture}
		\caption{Cycle graph $C_3$}\label{C3}
	\end{figure}
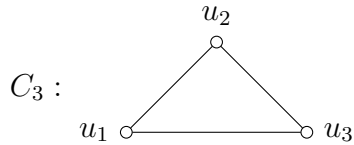
	
	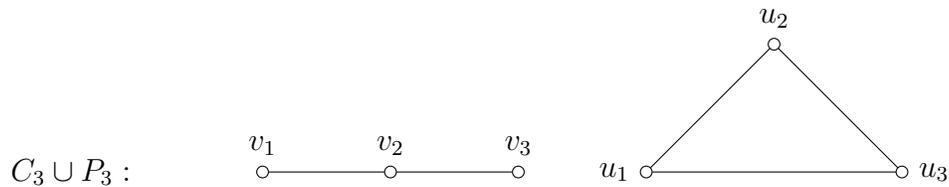
\begin{figure}[!hbt]\centering
		\begin{tikzpicture}[scale=1.7]
			\node[circle,draw,scale=0.4,label=90:$v_1$] (v1) at (0,0) {};
			\node[circle,draw,scale=0.4,label=90:$v_2$] (v2) at (1,0) {};
			\node[circle,draw,scale=0.4,label=90:$v_3$] (v3) at (2,0) {};
			\draw[-] (v1)--(v2)--(v3);
			
			\node[circle,draw,scale=0.4,label=180:$u_1$] (u1) at (3,0) {};
			\node[circle,draw,scale=0.4,label=90:$u_2$] (u2) at (4,1) {};
			\node[circle,draw,scale=0.4,label=0:$u_3$] (u3) at (5,0) {};
			\draw[-] (u1)--(u2)--(u3)--(u1);

			\node[rectangle] () at (-1.5,0) {$C_3\cup P_3:$};
		\end{tikzpicture}
		\caption{Union graph $C_3\cup P_3$}\label{union}
	\end{figure}

	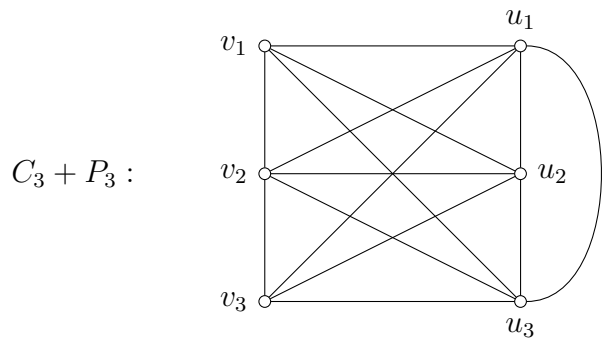
\begin{figure}[!hbt]\centering
		\begin{tikzpicture}[scale=1.7]
			\node[circle,draw,scale=0.4,label=180:$v_1$] (v1) at (0,2) {};
			\node[circle,draw,scale=0.4,label=180:$v_2$] (v2) at (0,1) {};
			\node[circle,draw,scale=0.4,label=180:$v_3$] (v3) at (0,0) {};
			\draw[-] (v1)--(v2)--(v3);
			
			\node[circle,draw,scale=0.4,label=90:$u_1$] (u1) at (2,2) {};
			\node[circle,draw,scale=0.4,label=0:$u_2$] (u2) at (2,1) {};
			\node[circle,draw,scale=0.4,label=270:$u_3$] (u3) at (2,0) {};
			\draw[-] (u1)--(u2)--(u3);
			\draw[-] (u3)edge[in=0,out=0](u1);
			
			\draw[-] (v1)--(u1);
			\draw[-] (v1)--(u2);
			\draw[-] (v1)--(u3);

			\draw[-] (v2)--(u1);
			\draw[-] (v2)--(u2);
			\draw[-] (v2)--(u3);

			\draw[-] (v3)--(u1);
			\draw[-] (v3)--(u2);
			\draw[-] (v3)--(u3);
			
			\node[rectangle] () at (-1.5,1) {$C_3+ P_3:$};
		\end{tikzpicture}
		\caption{Join graph $C_3+P_3$}\label{join}
	\end{figure}

	\begin{figure}[!hbt]\centering
		\begin{tikzpicture}[scale=1.1]
			
			\node[circle,draw,scale=0.4,label=180:$u_1$] (u1) at (3,0) {};
			\node[circle,draw,scale=0.4,label=270:$u_2$] (u2) at (4,1) {};
			\node[circle,draw,scale=0.4,label=0:$u_3$] (u3) at (5,0) {};
			\draw[-] (u1)--(u2)--(u3)--(u1);
			
			\node[circle,draw,scale=0.4,label=270:$v_1^1$] (v11) at (1,-1) {};
			\node[circle,draw,scale=0.4,label=270:$v_2^1$] (v21) at (2,-1) {};
			\node[circle,draw,scale=0.4,label=270:$v_3^1$] (v31) at (3,-1) {};
			\draw[-] (v11)--(v21)--(v31);
			\draw[-] (u1)--(v11);
			\draw[-] (u1)--(v21);
			\draw[-] (u1)--(v31);

			\node[circle,draw,scale=0.4,label=90:$v_1^2$] (v12) at (3,2) {};
			\node[circle,draw,scale=0.4,label=90:$v_2^2$] (v22) at (4,2) {};
			\node[circle,draw,scale=0.4,label=90:$v_3^2$] (v32) at (5,2) {};
			\draw[-] (v12)--(v22)--(v32);
			\draw[-] (u2)--(v12);
			\draw[-] (u2)--(v22);
			\draw[-] (u2)--(v32);

			\node[circle,draw,scale=0.4,label=270:$v_1^3$] (v13) at (5,-1) {};
			\node[circle,draw,scale=0.4,label=270:$v_2^3$] (v23) at (6,-1) {};
			\node[circle,draw,scale=0.4,label=270:$v_3^3$] (v33) at (7,-1) {};
			\draw[-] (v13)--(v23)--(v33);
			\draw[-] (u3)--(v13);
			\draw[-] (u3)--(v23);
			\draw[-] (u3)--(v33);
			
			\node[rectangle] () at (0,1) {$C_3\circ P_3:$};
		\end{tikzpicture}
		\caption{Corona graph $C_3\circ P_3$}\label{corona}
	\end{figure}
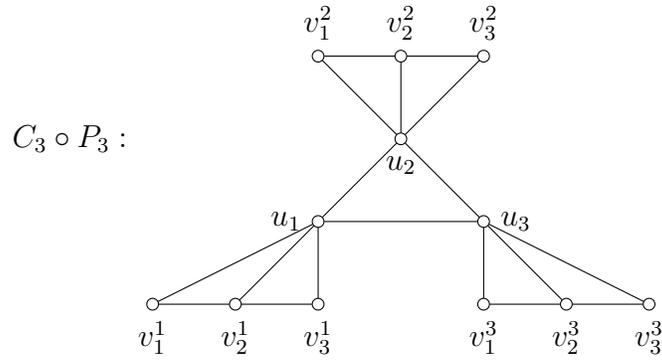
	
	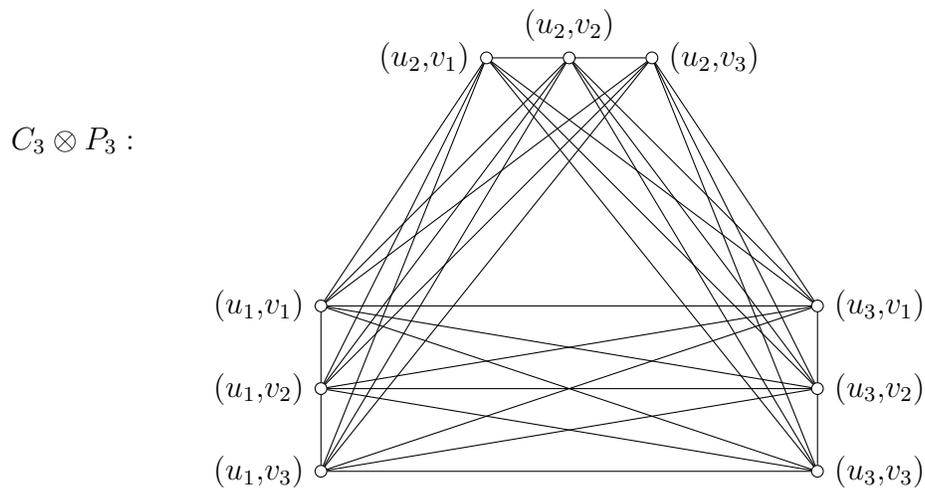
\begin{figure}[!hbt]\centering
		\begin{tikzpicture}[scale=1.1]

			\node[circle,draw,scale=0.4,label=180:$(u_1\text{$,$}v_1)$] (v11) at (1,-1) {};
			\node[circle,draw,scale=0.4,label=180:$(u_1\text{$,$}v_2)$] (v21) at (1,-2) {};
			\node[circle,draw,scale=0.4,label=180:$(u_1\text{$,$}v_3)$] (v31) at (1,-3) {};
			\draw[-] (v11)--(v21)--(v31);

			\node[circle,draw,scale=0.4,label=180:$(u_2\text{$,$}v_1)$] (v12) at (3,2) {};
			\node[circle,draw,scale=0.4,label=90:$(u_2\text{$,$}v_2)$] (v22) at (4,2) {};
			\node[circle,draw,scale=0.4,label=0:$(u_2\text{$,$}v_3)$] (v32) at (5,2) {};
			\draw[-] (v12)--(v22)--(v32);

			\node[circle,draw,scale=0.4,label=0:$(u_3\text{$,$}v_1)$] (v13) at (7,-1) {};
			\node[circle,draw,scale=0.4,label=0:$(u_3\text{$,$}v_2)$] (v23) at (7,-2) {};
			\node[circle,draw,scale=0.4,label=0:$(u_3\text{$,$}v_3)$] (v33) at (7,-3) {};
			\draw[-] (v13)--(v23)--(v33);
			
			\node[rectangle] () at (-2,1) {$C_3\otimes P_3:$};

			\draw[-] (v11)--(v12);
			\draw[-] (v11)--(v22);
			\draw[-] (v11)--(v32);

			\draw[-] (v21)--(v12);
			\draw[-] (v21)--(v22);
			\draw[-] (v21)--(v32);

			\draw[-] (v31)--(v12);
			\draw[-] (v31)--(v22);
			\draw[-] (v31)--(v32);

			\draw[-] (v11)--(v13);
			\draw[-] (v11)--(v23);
			\draw[-] (v11)--(v33);

			\draw[-] (v21)--(v13);
			\draw[-] (v21)--(v23);
			\draw[-] (v21)--(v33);

			\draw[-] (v31)--(v13);
			\draw[-] (v31)--(v23);
			\draw[-] (v31)--(v33);

			\draw[-] (v13)--(v12);
			\draw[-] (v13)--(v22);
			\draw[-] (v13)--(v32);

			\draw[-] (v23)--(v12);
			\draw[-] (v23)--(v22);
			\draw[-] (v23)--(v32);

			\draw[-] (v33)--(v12);
			\draw[-] (v33)--(v22);
			\draw[-] (v33)--(v32);
		\end{tikzpicture}
		\caption{Lexicographic product graph $C_3\otimes P_3$}\label{lex}
	\end{figure}

	\begin{figure}[!hbt]\centering
		\begin{tikzpicture}[scale=1.1]

			\node[circle,draw,scale=0.4,label=180:$(u_1\text{$,$}v_1)$] (v11) at (1,-1) {};
			\node[circle,draw,scale=0.4,label=180:$(u_1\text{$,$}v_2)$] (v21) at (1,-2) {};
			\node[circle,draw,scale=0.4,label=180:$(u_1\text{$,$}v_3)$] (v31) at (1,-3) {};
			\draw[-] (v11)--(v21)--(v31);

			\node[circle,draw,scale=0.4,label=180:$(u_2\text{$,$}v_1)$] (v12) at (3,2) {};
			\node[circle,draw,scale=0.4,label=90:$(u_2\text{$,$}v_2)$] (v22) at (4,2) {};
			\node[circle,draw,scale=0.4,label=0:$(u_2\text{$,$}v_3)$] (v32) at (5,2) {};
			\draw[-] (v12)--(v22)--(v32);

			\node[circle,draw,scale=0.4,label=0:$(u_3\text{$,$}v_1)$] (v13) at (7,-3) {};
			\node[circle,draw,scale=0.4,label=0:$(u_3\text{$,$}v_2)$] (v23) at (7,-2) {};
			\node[circle,draw,scale=0.4,label=0:$(u_3\text{$,$}v_3)$] (v33) at (7,-1) {};
			\draw[-] (v13)--(v23)--(v33);
			
			\node[rectangle] () at (-2,1) {$C_3\square P_3:$};

			\draw[-] (v11)--(v12)--(v13)--(v11);
			\draw[-] (v21)--(v22)--(v23)--(v21);
			\draw[-] (v31)--(v32)--(v33)--(v31);
		\end{tikzpicture}
		\caption{Cartesian product graph $C_3\square P_3$}\label{cart}
	\end{figure}
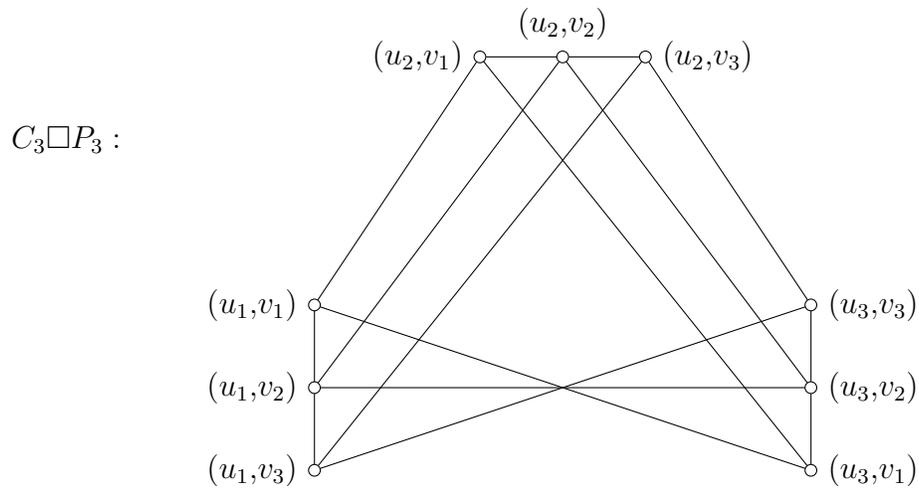

	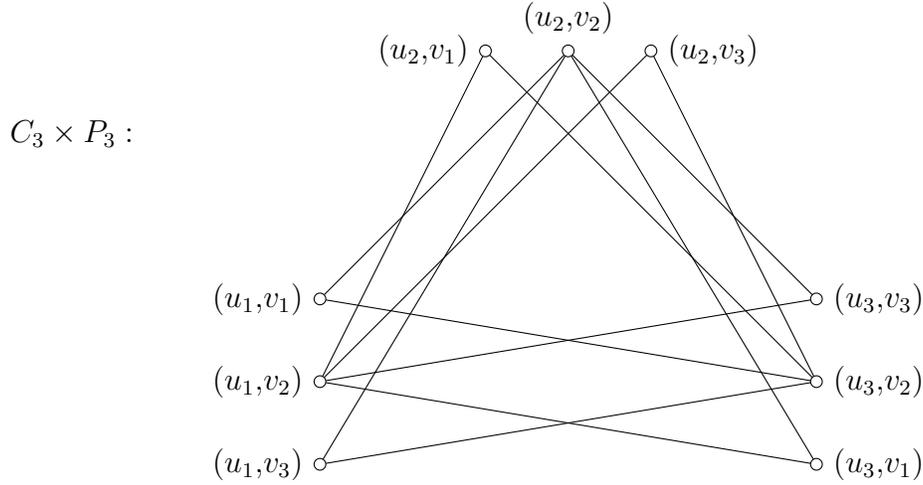
\begin{figure}[!hbt]\centering
		\begin{tikzpicture}[scale=1.1]

			\node[circle,draw,scale=0.4,label=180:$(u_1\text{$,$}v_1)$] (v11) at (1,-1) {};
			\node[circle,draw,scale=0.4,label=180:$(u_1\text{$,$}v_2)$] (v21) at (1,-2) {};
			\node[circle,draw,scale=0.4,label=180:$(u_1\text{$,$}v_3)$] (v31) at (1,-3) {};

			\node[circle,draw,scale=0.4,label=180:$(u_2\text{$,$}v_1)$] (v12) at (3,2) {};
			\node[circle,draw,scale=0.4,label=90:$(u_2\text{$,$}v_2)$] (v22) at (4,2) {};
			\node[circle,draw,scale=0.4,label=0:$(u_2\text{$,$}v_3)$] (v32) at (5,2) {};

			\node[circle,draw,scale=0.4,label=0:$(u_3\text{$,$}v_1)$] (v13) at (7,-3) {};
			\node[circle,draw,scale=0.4,label=0:$(u_3\text{$,$}v_2)$] (v23) at (7,-2) {};
			\node[circle,draw,scale=0.4,label=0:$(u_3\text{$,$}v_3)$] (v33) at (7,-1) {};

			\node[rectangle] () at (-2,1) {$C_3\times P_3:$};
			
			\draw[-] (v11)--(v22)--(v31);
			\draw[-] (v12)--(v21)--(v32);
			\draw[-] (v11)--(v23)--(v31);
			\draw[-] (v13)--(v21)--(v33);
			\draw[-] (v12)--(v23)--(v32);
			\draw[-] (v13)--(v22)--(v33);
		\end{tikzpicture}
		\caption{Tensor product graph $C_3\times P_3$}\label{ten}
	\end{figure}

	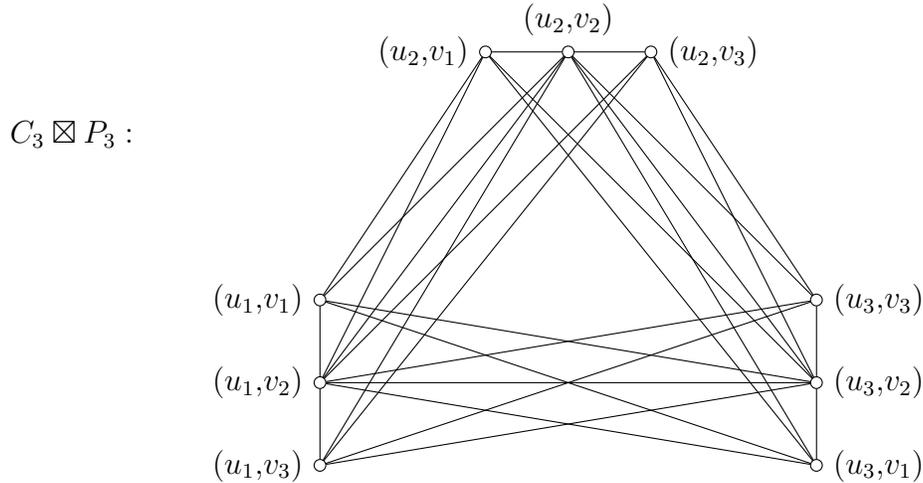
\begin{figure}[!hbt]\centering
		\begin{tikzpicture}[scale=1.1]

			\node[circle,draw,scale=0.4,label=180:$(u_1\text{$,$}v_1)$] (v11) at (1,-1) {};
			\node[circle,draw,scale=0.4,label=180:$(u_1\text{$,$}v_2)$] (v21) at (1,-2) {};
			\node[circle,draw,scale=0.4,label=180:$(u_1\text{$,$}v_3)$] (v31) at (1,-3) {};
			\draw[-] (v11)--(v21)--(v31);

			\node[circle,draw,scale=0.4,label=180:$(u_2\text{$,$}v_1)$] (v12) at (3,2) {};
			\node[circle,draw,scale=0.4,label=90:$(u_2\text{$,$}v_2)$] (v22) at (4,2) {};
			\node[circle,draw,scale=0.4,label=0:$(u_2\text{$,$}v_3)$] (v32) at (5,2) {};
			\draw[-] (v12)--(v22)--(v32);

			\node[circle,draw,scale=0.4,label=0:$(u_3\text{$,$}v_1)$] (v13) at (7,-3) {};
			\node[circle,draw,scale=0.4,label=0:$(u_3\text{$,$}v_2)$] (v23) at (7,-2) {};
			\node[circle,draw,scale=0.4,label=0:$(u_3\text{$,$}v_3)$] (v33) at (7,-1) {};
			\draw[-] (v13)--(v23)--(v33);
			
			\node[rectangle] () at (-2,1) {$C_3\boxtimes P_3:$};
			
			\draw[-] (v11)--(v22)--(v31);
			\draw[-] (v12)--(v21)--(v32);
			\draw[-] (v11)--(v23)--(v31);
			\draw[-] (v13)--(v21)--(v33);
			\draw[-] (v12)--(v23)--(v32);
			\draw[-] (v13)--(v22)--(v33);
			
			\draw[-] (v11)--(v12)--(v13)--(v11);
			\draw[-] (v21)--(v22)--(v23)--(v21);
			\draw[-] (v31)--(v32)--(v33)--(v31);
		\end{tikzpicture}
		\caption{Strong product graph $C_3\boxtimes P_3$}\label{str}
	\end{figure}

	\begin{definition}\normalfont\cite{Guyer}
		The \textit{$(a,b)$-Pisano period} of a positive integer $m$, denoted by $\pi_m(a,b)$ (where $F_0=a$ and $F_1=b$ are initial values), is the smallest positive integer $k$ such that $F_{n+k}\equiv F_n\mo{m}$ for all $n$.
	\end{definition}
	
	\begin{definition}\normalfont\label{PL}
		Let $p$ be an odd prime and let $S=\{0,1,\ldots,\pi_p(a,b)-1\}$ where $a$ and $b$ are the initial values. Suppose that $\Lambda_j^p(a,b)=\{\xi\in S:(F_{\xi}/p)=j\}$ for $j=-1,0,1$. If $k=|\Lambda_1^p(a,b)|-|\Lambda_{-1}^p(a,b)|-|\Lambda_0^p(a,b)|$, then $p$ is called \textit{$k$-Pisano-Legendre prime relative to $(a,b)$} (abbreviated by \textit{$k$-PL prime relative to $(a,b)$}). The \textit{Pisano set of $(a,b)$}, denoted by $\chi(a,b)$ is the set of all integers $k$ such that there exists an odd prime $p$ for which $p$ is a $k$-PL prime relative to $(a,b)$. 
	\end{definition}
	
	Consider the odd prime $p=3$. It is easy to show that the $(0,1)$-Pisano period of $3$ is $\pi_3(0,1)=8$. Now, we evaluate the Legendre symbol $(F_i/3)$ for $i=0,1,\ldots,7$. Note that $(0/3)=0$, $(1/3)=1$, and $(2/3)=-1$. Consider Table~\ref{tab1}.
	\begin{table}[h]\centering
		\begin{tabular}{|c|c|c|c|c|c|c|c|c|}
			\hline
			$i$&$0$&$1$&$2$&$3$&$4$&$5$&$6$&$7$\\
			\hline
			$F_i$&$0$&$1$&$1$&$2$&$3$&$5$&$8$&$13$\\
			\hline
			$(F_i/3)$&$0$&$1$&$1$&$-1$&$0$&$-1$&$-1$&$1$\\
			\hline
		\end{tabular}
		\caption{Legendre symbol $(F_i/3)$ for $i=0,1,\ldots,7$}\label{tab1}
	\end{table}\\
	Hence, $3,5,6\in \Lambda_{-1}^3(0,1)$, $0,4\in \Lambda_{0}^3(0,1)$, and $1,2,7\in \Lambda_{1}^3(0,1)$. So,
	$$|\Lambda_{1}^3(0,1)|-|\Lambda_{-1}^3(0,1)|-|\Lambda_{0}^3(0,1)|=3-3-2=-2.$$
	Therefore, $3$ is a $(-2)$-PL prime relative to $(0,1)$. This means that $-2\in \chi(0,1)$.
	
	\begin{definition}\normalfont
		Let $G$ be a simple connected graph of order $n$ and let $F_0=a$ and $F_1=b$ be the initial values. A bijective function $f:V(G)\to\{0,1,\ldots,n-1\}$ is called \textit{$(a,b)$-Fibonacci-Legendre cordial labeling modulo $p$} (abbreviated \textit{$(a,b)$-FLC labeling modulo $p$}), where $p$ is an odd prime, if the induced function $f_p^*:E(G)\to \{0,1\}$, defined by
		$$f_p^*(uv)=\begin{cases}
			\frac{1+([F_{f(u)}+F_{f(v)}]/p)}{2}&\text{ if }F_{f(u)}+F_{f(v)}\not\equiv0\mo{p}\\
			0&\text{ otherwise},
		\end{cases}$$
		satisfies the condition $|e_{f_p^*}(0)-e_{f_p^*}(1)|\leq 1$ where $e_{f_p^*}(i)$ is the number of edges labeled $i$ ($i=0,1$). A graph that admits this labeling is called \textit{$(a,b)$-FLC graph modulo $p$}.
	\end{definition}
	
	As an example, consider the cycle graph $C_3$ shown in Figure~\ref{C3}. In this case, consider the odd prime $p=5$, and the integers $a=7$ and $b=4$. So, we consider the first three terms of $(7,4)$-Fibonacci sequence, namely, $F_0=7$, $F_1=4$, and $F_2=11$. Let $f:V(C_3)\to \{0,1,2\}$ be a function defined by
	$$f(u_1)=0,\quad f(u_2)=1,\quad f(u_3)=2.$$
	Hence, $f$ is a bijective function. Note that $(1/5)=1$, $(3/5)=-1$, and $(0/5)=0$. For edge labels, we have
	\begin{align*}
		u_1u_2:\text{ }([F_{f(u_1)}+F_{f(u_2)}]/5)=([F_0+F_1]/5)=(11/5)=(1/5)&=1\\
		u_1u_3:\text{ }([F_{f(u_1)}+F_{f(u_3)}]/5)=([F_0+F_2]/5)=(18/5)=(3/5)&=-1\\
		u_2u_3:\text{ }([F_{f(u_2)}+F_{f(u_3)}]/5)=([F_1+F_2]/5)=(15/5)=(0/5)&=0.
	\end{align*}
	Therefore,
	$$f_5^*(u_1u_2)=1,\quad f_5^*(u_1u_3)=0,\quad f_5^*(u_2u_3)=0.$$
	Consequently, $e_{f_p^*}(0)=2$ and $e_{f_p^*}(1)=1$ and it follows that $|e_{f_p^*}(0)-e_{f_p^*}(1)|=1$. This implies that $C_3$ is a $(7,4)$-FLC graph modulo $5$. Figure~\ref{C3FLC} demonstrates the $(7,4)$-FLC labeling modulo $5$ of $C_3$.
	
	\begin{figure}[h]\centering
		\begin{tikzpicture}
			\node[circle,draw,scale=0.4,label=180:$u_1\mapsto 0$] (v1) at (0,0) {};
			\node[circle,draw,scale=0.4,label=90:$u_2\mapsto 1$] (v2) at (1,1) {};
			\node[circle,draw,scale=0.4,label=0:$u_3\mapsto 2$] (v3) at (2,0) {};
			\draw[-,color=blue] (v1)edge["$11$"](v2);
			\draw[-,color=red] (v3)edge["$18$"](v1);
			\draw[-,color=red] (v2)edge["$15$"](v3);
		\end{tikzpicture}
		\caption{$(7,4)$-FLC labeling modulo $5$ of $C_3$}\label{C3FLC}
	\end{figure}
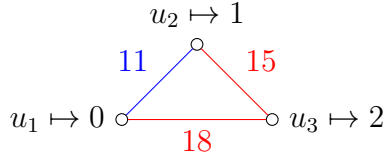
	
	\begin{theorem}\label{multiplicative}\normalfont\cite{Burton,Rosen}
		Suppose that $a$ and $b$ are integers. If $p$ is an odd prime, then $(ab/p)=(a/p)(b/p)$.
	\end{theorem}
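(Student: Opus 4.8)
The plan is to derive the multiplicative property from \emph{Euler's criterion}, which states that for an odd prime $p$ and any integer $c$ one has $(c/p)\equiv c^{(p-1)/2}\mo{p}$. This is the natural tool because it converts the Legendre symbol — an a priori combinatorial object recording quadratic residuosity — into an honest power in $\mathbb{Z}/p\mathbb{Z}$, where multiplicativity is just the usual law of exponents. Euler's criterion is itself a classical consequence of Fermat's little theorem together with the fact that $\mathbb{Z}/p\mathbb{Z}$ is a field, and it is available from \cite{Burton,Rosen}; I would cite it (or, if a self-contained argument were wanted, prove it via the root-counting sketched below).

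First I would dispose of the degenerate case. If $p\mid a$ or $p\mid b$, then $p\mid ab$, so by definition $(ab/p)=0$; moreover at least one of $(a/p)$ and $(b/p)$ equals $0$, so their product is also $0$, and the identity holds trivially. This lets me assume for the remainder that $p\nmid a$ and $p\nmid b$, whence $p\nmid ab$ as well.

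Next, assuming $p\nmid a$ and $p\nmid b$, I would apply Euler's criterion three times and use the ordinary rules of exponents:
\begin{align*}
(ab/p)&\equiv (ab)^{(p-1)/2}=a^{(p-1)/2}\,b^{(p-1)/2}\\
&\equiv (a/p)(b/p)\mo{p}.
\end{align*}
This establishes that $(ab/p)$ and $(a/p)(b/p)$ are congruent modulo $p$. To upgrade the congruence to an equality, I would observe that in this non-degenerate case both quantities lie in $\{-1,1\}$, so their difference lies in $\{-2,0,2\}$; since $p$ is an odd prime we have $p\geq 3>2$, and the only multiple of $p$ of absolute value at most $2$ is $0$. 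Hence $(ab/p)=(a/p)(b/p)$, as claimed.

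I expect the only genuinely delicate point to be the justification of Euler's criterion in the nonresidue case — namely, showing that a quadratic nonresidue $c$ satisfies $c^{(p-1)/2}\equiv -1$ rather than $+1$. The argument is that $c^{p-1}\equiv 1$ by Fermat forces $c^{(p-1)/2}\equiv\pm 1$, while the solutions of $x^{(p-1)/2}\equiv 1$ are at most $(p-1)/2$ in number (a polynomial of degree $(p-1)/2$ over the field $\mathbb{Z}/p\mathbb{Z}$) and are already exhausted by the $(p-1)/2$ quadratic residues; every nonresidue must therefore give $-1$. Once Euler's criterion is in hand, every subsequent step is routine, and the passage from congruence to equality via the range restriction is the small finishing touch worth stating explicitly.
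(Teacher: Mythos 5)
Your proof is correct, and nothing needs to be compared against the paper here: the paper states this multiplicativity property as a classical result cited from Burton and Rosen, without giving any proof of its own. Your argument via Euler's criterion --- disposing of the case $p\mid ab$ first, then using $(c/p)\equiv c^{(p-1)/2}\mo{p}$ and the bound $|(ab/p)-(a/p)(b/p)|\leq 2<p$ to upgrade the congruence to an equality --- is precisely the standard proof found in those cited references, and your sketch of the nonresidue case of Euler's criterion (root-counting for $x^{(p-1)/2}-1$ over the field $\mathbb{Z}/p\mathbb{Z}$) is also sound.
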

	
	\begin{theorem}\label{2/p}\normalfont\cite{Burton,Rosen}
		Let $p$ be an odd prime. Then
		$$(2/p)=\begin{cases}
			-1&\text{ if }p\equiv\pm 3\mo{8}\\
			1&\text{ if }p\equiv\pm 1\mo{8}.
		\end{cases}$$
	\end{theorem}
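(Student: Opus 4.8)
The plan is to derive this via Gauss's Lemma together with a short residue count, following the classical approach found in Burton and Rosen. First I would recall (or, to keep things self-contained, quickly establish) Gauss's Lemma: for an odd prime $p$ and an integer $a$ with $\gcd(a,p)=1$, one has $(a/p)=(-1)^s$, where $s$ is the number of elements of the set $\{a,2a,\ldots,\tfrac{p-1}{2}a\}$ whose least positive residue modulo $p$ exceeds $p/2$. The lemma itself follows from Euler's criterion $a^{(p-1)/2}\equiv(a/p)\mo{p}$ by multiplying together the least positive residues of $a,2a,\ldots,\tfrac{p-1}{2}a$ and observing that replacing each ``large'' residue $r$ (those with $r>p/2$) by $p-r$ introduces exactly one sign change; the resulting product equals $(-1)^s\,(\tfrac{p-1}{2})!$ modulo $p$, and cancelling the invertible factor $(\tfrac{p-1}{2})!$ yields $a^{(p-1)/2}\equiv(-1)^s\mo{p}$, hence $(a/p)=(-1)^s$.

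Next I would specialize to $a=2$. Here the relevant set is $\{2,4,6,\ldots,p-1\}$, and since $2\cdot\tfrac{p-1}{2}=p-1<p$, every element is already its own least positive residue, so no modular reduction is needed. Thus $s$ is simply the number of integers $2k$ with $1\le k\le\tfrac{p-1}{2}$ satisfying $2k>\tfrac{p}{2}$, equivalently $k>\tfrac{p}{4}$, which gives the closed form $s=\tfrac{p-1}{2}-\lfloor p/4\rfloor$.

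The remaining step is to determine the parity of $s$, which I would carry out by writing $p$ in each of the four odd residue classes modulo $8$. Substituting $p=8m+1,\,8m+3,\,8m+5,\,8m+7$ into the formula for $s$ yields $s\equiv 0,1,1,0\mo{2}$ respectively, so that $(2/p)=(-1)^s$ equals $+1$ when $p\equiv\pm1\mo{8}$ and $-1$ when $p\equiv\pm3\mo{8}$ (using $7\equiv-1$ and $5\equiv-3$ modulo $8$), which is exactly the claimed dichotomy. Equivalently, one can verify in a single line that $s\equiv\tfrac{p^2-1}{8}\mo{2}$, giving the compact form $(2/p)=(-1)^{(p^2-1)/8}$ from which the four cases drop out immediately.

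I do not expect any serious obstacle: once Gauss's Lemma is available the argument is a finite residue count, so the only point requiring genuine care is the justification of the sign-counting in the lemma, namely that the map sending each large residue $r$ to $p-r$ is a bijection onto $\{1,\ldots,\tfrac{p-1}{2}\}$ and that no residue equals $p/2$. The latter holds automatically since $p$ is odd, so $p/2$ is not an integer; the former is the standard core of Gauss's Lemma and, for $a=2$, is in any case transparent. The case analysis modulo $8$ is entirely routine.
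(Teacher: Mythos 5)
The paper offers no proof of this theorem at all: it is quoted as a known classical result with citations to Burton and Rosen. Your derivation via Gauss's Lemma, the count $s=\tfrac{p-1}{2}-\lfloor p/4\rfloor$ for $a=2$, and the case analysis modulo $8$ is correct and is precisely the standard argument found in those cited references, so it matches the intended justification exactly.
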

	
	\section{$(a,b)$-Fibonacci-Legendre Cordial Labeling}\label{sec3}
	
	\hspace{1cm}In this section, we denote $p$ as odd prime. All terminologies of Definition~\ref{PL} will be used in every proof. In addition, suppose that $F_0=a$ and $F_1=b$ are the initial values. Furthermore, it should be noted that $F_{m+n\pi_p(a,b)}\equiv F_{m}\mo{p}$ for all $m,n\geq 1$.
	
	\begin{theorem}\normalfont\label{thm1}
		Let $0\in\chi(a,b)$. The path graph $P_{q\pi_p(a,b)+1}$ is an $(a,b)$-FLC graph modulo $p$, where $p$ is a $0$-PL prime relative to $(a,b)$, for all $q\geq 1$.
	\end{theorem}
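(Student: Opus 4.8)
The plan is to use the canonical ``identity'' labeling and exploit the defining recurrence together with the periodicity of the Fibonacci residues modulo $p$. Write $\pi:=\pi_p(a,b)$, so that $P_{q\pi+1}$ has vertices $v_1,\dots,v_{q\pi+1}$ and exactly $q\pi$ edges $v_iv_{i+1}$ for $i=1,\dots,q\pi$. I would set $f(v_i)=i-1$, which is clearly a bijection onto $\{0,1,\dots,q\pi\}$. The key observation is that the Fibonacci sum attached to the edge $v_iv_{i+1}$ is $F_{f(v_i)}+F_{f(v_{i+1})}=F_{i-1}+F_i$, which by the recurrence $F_{i+1}=F_{i-1}+F_i$ (valid for $i\geq 1$) equals $F_{i+1}$. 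Hence the induced edge labels are governed entirely by the Legendre symbols $(F_{i+1}/p)$ for $i=1,\dots,q\pi$, that is, by $(F_m/p)$ as $m$ runs over $\{2,3,\dots,q\pi+1\}$. Reading off the definition of $f_p^*$, such an edge receives label $1$ precisely when $(F_m/p)=1$ and label $0$ precisely when $(F_m/p)\in\{-1,0\}$, the two subcases merging because both a nonresidue sum and a sum $\equiv 0\pmod p$ produce the value $0$.

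Next I would carry out the counting by partitioning the index range into the $q$ consecutive blocks $B_j=\{(j-1)\pi+2,\dots,j\pi+1\}$ for $j=1,\dots,q$, each of length $\pi$. Since $F_{m+\pi}\equiv F_m\pmod p$, the value $(F_m/p)$ depends only on $m$ modulo $\pi$; and because each $B_j$ consists of $\pi$ consecutive integers, it forms a complete residue system modulo $\pi$. Consequently $B_j$ contains exactly $|\Lambda_i^p(a,b)|$ indices $m$ with $(F_m/p)=i$ for each $i\in\{-1,0,1\}$, independently of $j$. Summing over the $q$ blocks then gives $e_{f_p^*}(1)=q\,|\Lambda_1^p(a,b)|$ and $e_{f_p^*}(0)=q\bigl(|\Lambda_{-1}^p(a,b)|+|\Lambda_0^p(a,b)|\bigr)$.

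Finally, subtracting yields
$$
e_{f_p^*}(1)-e_{f_p^*}(0)=q\bigl(|\Lambda_1^p(a,b)|-|\Lambda_{-1}^p(a,b)|-|\Lambda_0^p(a,b)|\bigr)=qk,
$$
which is $0$ because $p$ is a $0$-PL prime relative to $(a,b)$ (the hypothesis $0\in\chi(a,b)$ merely guarantees such a prime exists). Therefore $|e_{f_p^*}(0)-e_{f_p^*}(1)|=0\leq 1$, so $f$ is an $(a,b)$-FLC labeling modulo $p$ and $P_{q\pi+1}$ is an $(a,b)$-FLC graph modulo $p$ for every $q\geq 1$.

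I do not expect a genuine obstacle here; the only step requiring care is the indexing bookkeeping, namely verifying that the shifted range $\{2,\dots,q\pi+1\}$ decomposes into exactly $q$ complete periods. The telescoping identity $F_{i-1}+F_i=F_{i+1}$ is precisely what makes the identity labeling align the edge sums with a clean consecutive run of Fibonacci indices, after which the entire argument reduces to periodicity of $(F_m/p)$ and the defining equation $k=|\Lambda_1^p(a,b)|-|\Lambda_{-1}^p(a,b)|-|\Lambda_0^p(a,b)|$ evaluated at $k=0$.
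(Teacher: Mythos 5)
Your proof is correct and follows essentially the same route as the paper: the identity labeling $f(v_i)=i-1$, the recurrence collapsing each edge sum $F_{i-1}+F_i$ to $F_{i+1}$, and periodicity of the Legendre symbols over blocks of length $\pi_p(a,b)$ yielding the counts $e_{f_p^*}(1)=q|\Lambda_1^p(a,b)|$ and $e_{f_p^*}(0)=q\bigl(|\Lambda_{-1}^p(a,b)|+|\Lambda_0^p(a,b)|\bigr)$. Your complete-residue-system phrasing is just a cleaner packaging of the paper's explicit wrap-around case analysis; the substance is identical.
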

	
	\begin{proof}
		Let $V(P_{q\pi_p(a,b)+1})=\{v_0,v_1,\ldots,v_{q\pi_p(a,b)}\}$ (see Figure~\ref{Pqpi}). In addition, suppose that $f:V(P_{q\pi_p(a,b)+1})\to \{0,1,\ldots,q\pi_p(a,b)\}$ is a function defined by
		$$f(v_i)=i\text{ for }i=0,1,\ldots,q\pi_p(a,b).$$
		Hence, $f$ is a bijective function. 
		
		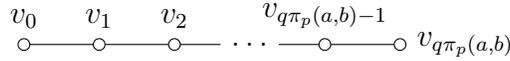
\begin{figure}[h]\centering
			\begin{tikzpicture}
				\node[circle,draw,scale=0.4,label=90:$v_0$] (v1) at (0,0) {};
				\node[circle,draw,scale=0.4,label=90:$v_1$] (v2) at (1,0) {};
				\node[circle,draw,scale=0.4,label=90:$v_2$] (v3) at (2,0) {};
				\node[rectangle] (cd) at (3,0) {$\ldots$};
				\node[circle,draw,scale=0.4,label=90:$v_{q\pi_p(a,b)-1}$] (vqpi-1) at (4,0) {};
				\node[circle,draw,scale=0.4,label=0:$v_{q\pi_p(a,b)}$] (vqpi) at (5,0) {};
				\draw[-] (v1)--(v2)--(v3)--(cd)--(vqpi-1)--(vqpi);
			\end{tikzpicture}
			\caption{Path graph $P_{q\pi_p(a,b)+1}$}\label{Pqpi}
		\end{figure}

		Suppose that 
		$$\mathbb{F}(i,j)=F_{f(v_{i+(j-1)\pi_p(a,b)})}+F_{f(v_{i+1+(j-1)\pi_p(a,b)})}.$$
		Now, for each $j=1,2,\ldots,q$, we have
		\begin{align*}
			\mathbb{F}(i,j)&= F_{i+(j-1)\pi_p(a,b)}+F_{i+1+(j-1)\pi_p(a,b)}\\
			&\equiv F_{i}+F_{i+1}\mo{p}\\
			&\equiv F_{i+2}\mo{p}\\
			&\equiv \begin{cases}
				F_{i+2}\mo{p}&\text{ for }i=0,1,\ldots,\pi_p(a,b)-3\\
				F_0\mo&\text{ for }i=\pi_p(a,b)-2\\
				F_1\mo{p}&\text{ for }i=\pi_p(a,b)-1.
			\end{cases}
		\end{align*}
		Taking the Legendre symbol $(F_i/p)$ for $i=0,1,\ldots,\pi_p(a,b)-1$, we have
		$$e_{f_p^*}(0)=q[|\Lambda_{-1}^p(a,b)|+|\Lambda_{0}^p(a,b)|]\text{ and }e_{f_p^*}(1)=q|\Lambda_{1}^p(a,b)|.$$
		Since $p$ is a $0$-PL prime relative to $(a,b)$, $|e_{f_p^*}(0)-e_{f_p^*}(1)|=0$. Therefore, $P_{q\pi_p(a,b)+1}$ is an $(a,b)$-FLC graph modulo $p$.
	\end{proof}
	
	\begin{theorem}\normalfont\label{thm2}
		Let $0\in\chi(0,b)$. The star graph $S_{q\pi_p(0,b)+1}$ is a $(0,b)$-FLC graph modulo $p$, where $p$ is a $0$-PL prime relative to $(0,b)$, for all $q\geq 1$.
	\end{theorem}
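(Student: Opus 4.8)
The plan is to mirror the proof of Theorem~\ref{thm1}, but to exploit the fact that the initial value $F_0=a=0$. First I would write $V(S_{q\pi_p(0,b)+1})=\{v_1,v_2,\ldots,v_{q\pi_p(0,b)}\}\cup\{x_0\}$ and define the bijection $f$ by $f(x_0)=0$ and $f(v_i)=i$ for $i=1,2,\ldots,q\pi_p(0,b)$. The decisive point is that assigning the label $0$ to the central vertex makes $F_{f(x_0)}=F_0=0$, so that every edge $v_ix_0$ carries the sum $F_{f(v_i)}+F_{f(x_0)}=F_i+0=F_i$. In this way each edge label depends on a single Fibonacci term rather than a genuine sum, which is precisely what makes the star easier to analyze than the path.

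With that reduction, the induced label of $v_ix_0$ is $f_p^*(v_ix_0)=\frac{1+(F_i/p)}{2}$ when $F_i\not\equiv 0\mo{p}$ and $0$ otherwise. Reading off the three cases of the Legendre symbol, the edge receives label $1$ exactly when $(F_i/p)=1$ and label $0$ exactly when $(F_i/p)=-1$ or $F_i\equiv 0\mo{p}$. I would then partition the leaf indices $1,2,\ldots,q\pi_p(0,b)$ into the $q$ consecutive blocks $B_j=\{(j-1)\pi_p(0,b)+1,\ldots,j\pi_p(0,b)\}$ for $j=1,\ldots,q$. Using the periodicity $F_{m+n\pi_p(0,b)}\equiv F_m\mo{p}$ noted at the start of the section, every index $(j-1)\pi_p(0,b)+r$ satisfies $(F_{(j-1)\pi_p(0,b)+r}/p)=(F_r/p)$, so the multiset of Legendre symbols contributed by each block equals $\{(F_1/p),\ldots,(F_{\pi_p(0,b)}/p)\}$.

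The one place requiring care — and the main obstacle — is the index shift: the blocks run over $r=1,\ldots,\pi_p(0,b)$ rather than over $S=\{0,1,\ldots,\pi_p(0,b)-1\}$. I would resolve this by noting that $F_{\pi_p(0,b)}\equiv F_0=0\mo{p}$, so $(F_{\pi_p(0,b)}/p)=(F_0/p)=0$; hence the block multiset $\{(F_1/p),\ldots,(F_{\pi_p(0,b)}/p)\}$ coincides with $\{(F_0/p),(F_1/p),\ldots,(F_{\pi_p(0,b)-1}/p)\}$, the full distribution over $S$. Each block therefore contributes exactly $|\Lambda_1^p(0,b)|$ edges labeled $1$ and $|\Lambda_{-1}^p(0,b)|+|\Lambda_0^p(0,b)|$ edges labeled $0$, giving $e_{f_p^*}(1)=q|\Lambda_1^p(0,b)|$ and $e_{f_p^*}(0)=q[\,|\Lambda_{-1}^p(0,b)|+|\Lambda_0^p(0,b)|\,]$. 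Finally, since $p$ is a $0$-PL prime relative to $(0,b)$, we have $|\Lambda_1^p(0,b)|=|\Lambda_{-1}^p(0,b)|+|\Lambda_0^p(0,b)|$, whence $e_{f_p^*}(0)=e_{f_p^*}(1)$ and $|e_{f_p^*}(0)-e_{f_p^*}(1)|=0\leq 1$, completing the argument.
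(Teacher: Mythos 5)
Your proposal is correct and follows essentially the same route as the paper's proof: assign $0$ to the central vertex so each edge sum reduces to a single Fibonacci term, use the periodicity $F_{m+n\pi_p(0,b)}\equiv F_m\mo{p}$ to reduce each block of $\pi_p(0,b)$ leaves to the symbols $(F_i/p)$ over one Pisano period, and conclude from $|\Lambda_1^p(0,b)|=|\Lambda_{-1}^p(0,b)|+|\Lambda_0^p(0,b)|$. Your explicit handling of the index shift via $(F_{\pi_p(0,b)}/p)=(F_0/p)=0$ is the same observation the paper makes when it reduces the edge at index $\pi_p(0,b)+(j-1)\pi_p(0,b)$ to $F_0$, just phrased as a multiset identity.
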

	
	\begin{proof}
		Assume that $V(S_{q\pi_p(0,b)+1})=\{v_0,v_1,\ldots,v_{q\pi_p(0,b)}\}$ where $v_0$ is the central vertex (see Figure~\ref{Sqpi}). Let $f:V(S_{q\pi_p(0,b)+1})\to \{0,1,\ldots,q\pi_p(0,b)\}$ be a function defined by
		$$f(v_i)=i\text{ for }i=0,1,\ldots,q\pi_p(0,b).$$
		Clearly, $f$ is a bijective function.
		
		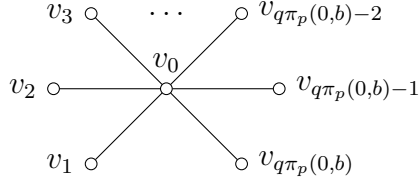
\begin{figure}[h]\centering
			\begin{tikzpicture}
				\node[circle,draw,scale=0.4,label=90:$v_0$] (v0) at (0,0) {};
				\node[circle,draw,scale=0.4,label=180:$v_1$] (v1) at (-1,-1) {};
				\node[circle,draw,scale=0.4,label=180:$v_2$] (v2) at (-1.5,0) {};
				\node[circle,draw,scale=0.4,label=180:$v_3$] (v3) at (-1,1) {};
				\node[rectangle] (cd) at (0,1) {$\ldots$};
				\node[circle,draw,scale=0.4,label=0:$v_{q\pi_p(0,b)-2}$] (vqpi-2) at (1,1) {};
				\node[circle,draw,scale=0.4,label=0:$v_{q\pi_p(0,b)-1}$] (vqpi-1) at (1.5,0) {};
				\node[circle,draw,scale=0.4,label=0:$v_{q\pi_p(0,b)}$] (vqpi) at (1,-1) {};
				
				\draw[-] (v0)--(v1);
				\draw[-] (v0)--(v2);
				\draw[-] (v0)--(v3);
				\draw[-] (v0)--(vqpi-2);
				\draw[-] (v0)--(vqpi-1);
				\draw[-] (v0)--(vqpi);
			\end{tikzpicture}
			\caption{Star graph $S_{q\pi_p(a,b)+1}$}\label{Sqpi}
		\end{figure}
		
		Note that $F_0=0$. For the edges, for each $j=1,2,\ldots,q$, 
		$$
		F_{f(v_0)}+F_{f(v_{i+(j-1)\pi_p(0,b)})}=F_0+F_{i+(j-1)\pi_p(0,b)}\equiv F_i\mo{p}$$
		for $i=1,2,\ldots,\pi_p(0,b)-1$, and
		$$F_{f(v_0)}+F_{f(v_{\pi_p(0,b)+(j-1)\pi_p(0,b)})}=F_0+F_{\pi_p(0,b)+(j-1)\pi_p(0,b)}\equiv F_0\mo{p}.$$
		Calculating the Legendre symbol $(F_i/p)$ for $i=0,1,\ldots,\pi_p(0,b)-1$, we obtain
		$$e_{f_p^*}(0)=q[|\Lambda_{-1}^p(0,b)|+|\Lambda_{0}^p(0,b)|]\text{ and }e_{f_p^*}(1)=q|\Lambda_{1}^p(0,b)|.$$
		Clearly, $|e_{f_p^*}(0)-e_{f_p^*}(1)|=0$ because $p$ is a $0$-PL prime relative to $(0,b)$. Consequently, $S_{q\pi_p(0,b)+1}$ is a $(0,b)$-FLC graph modulo $p$.
	\end{proof}
	
	\begin{theorem}\normalfont\label{thm3}
		Let $0\in\chi(0,b)$. The wheel graph $W_{q\pi_p(0,b)+1}$ is a $(0,b)$-FLC graph modulo $p$, where $p$ is a $0$-PL prime relative to $(0,b)$, for all $q\geq 1$.
	\end{theorem}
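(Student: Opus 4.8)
The plan is to combine the path-edge analysis of Theorem~\ref{thm1} with the spoke-edge analysis of Theorem~\ref{thm2}, since a wheel $W_{q\pi_p(0,b)+1}$ consists of a cycle $C_{q\pi_p(0,b)}$ together with a central vertex $x_0$ joined to every cycle vertex. Write $\pi=\pi_p(0,b)$ for brevity. I would set $V(W_{q\pi+1})=\{x_0\}\cup\{v_1,v_2,\ldots,v_{q\pi}\}$, where the $v_i$ form the rim cycle with edges $v_iv_{i+1}$ (for $1\le i\le q\pi-1$) together with the closing edge $v_{q\pi}v_1$, and each spoke $x_0v_i$ is an edge. I would then define the bijection $f(x_0)=0$ and $f(v_i)=i$ for $i=1,\ldots,q\pi$. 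The graph has $2q\pi$ edges ($q\pi$ rim edges and $q\pi$ spokes), so I must show the induced labels split as evenly as possible.

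First I would handle the spokes exactly as in Theorem~\ref{thm2}. Because the initial value is $a=0$, we have $F_0=0$, so each spoke value is $F_{f(x_0)}+F_{f(v_i)}=F_0+F_i\equiv F_i\mo{p}$ for $i=1,\ldots,q\pi$. As $i$ runs over $1,\ldots,q\pi$ the residues $i\bmod\pi$ cover each class in $\{0,1,\ldots,\pi-1\}$ exactly $q$ times, so the Legendre symbols $(F_i/p)$ reproduce $q$ complete copies of the one-period distribution; the spokes therefore contribute $q|\Lambda_1^p(0,b)|$ edges of symbol $1$ and $q[|\Lambda_{-1}^p(0,b)|+|\Lambda_0^p(0,b)|]$ edges of symbol $-1$ or $0$.

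Next I would treat the rim edges in the manner of Theorem~\ref{thm1}. For $1\le i\le q\pi-1$ the rim edge $v_iv_{i+1}$ has value $F_i+F_{i+1}\equiv F_{i+2}\mo{p}$, giving the residues $F_3,F_4,\ldots,F_{q\pi+1}$, where $F_{q\pi+1}\equiv F_1\mo{p}$. The only delicate point is the closing edge $v_{q\pi}v_1$: here I would invoke periodicity $F_{q\pi}\equiv F_0\mo{p}$ to obtain $F_{q\pi}+F_1\equiv F_0+F_1=F_2\mo{p}$. Collecting these, the $q\pi$ rim edges realize precisely the residues $F_1,F_2,\ldots,F_{q\pi}\mo{p}$, i.e.\ again $q$ complete copies of the one-period distribution, matching the spoke contribution.

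Finally I would combine the two tallies. Recalling that symbol $1$ induces edge label $1$ while symbols $-1$ and $0$ both induce edge label $0$, the totals are $e_{f_p^*}(1)=2q|\Lambda_1^p(0,b)|$ and $e_{f_p^*}(0)=2q[|\Lambda_{-1}^p(0,b)|+|\Lambda_0^p(0,b)|]$, so that $|e_{f_p^*}(0)-e_{f_p^*}(1)|=2q\big||\Lambda_1^p(0,b)|-|\Lambda_{-1}^p(0,b)|-|\Lambda_0^p(0,b)|\big|=2q|k|$. Since $p$ is a $0$-PL prime relative to $(0,b)$ we have $k=0$, whence the difference is $0\le 1$ and $W_{q\pi+1}$ is a $(0,b)$-FLC graph modulo $p$. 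The main obstacle, and the only place the argument departs from merely gluing the two earlier proofs, is verifying that the closing rim edge folds back correctly via the Pisano period, so that the rim realizes a clean $q$-fold period rather than leaving a parity defect.
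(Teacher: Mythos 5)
Your proposal is correct and follows essentially the same route as the paper's own proof: the identical bijection ($f(x_0)=0$, $f(v_i)=i$), the same decomposition into rim edges handled as in Theorem~\ref{thm1} and spokes handled as in Theorem~\ref{thm2}, the same periodicity argument for the closing edge $v_{q\pi_p(0,b)}v_1$ via $F_{q\pi_p(0,b)}+F_1\equiv F_0+F_1\equiv F_2\mo{p}$, and the same final tally $e_{f_p^*}(0)=2q[|\Lambda_{-1}^p(0,b)|+|\Lambda_0^p(0,b)|]$, $e_{f_p^*}(1)=2q|\Lambda_1^p(0,b)|$. No gaps to report.
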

	
	\begin{proof}
		Let $C_{q\pi_p(0,b)}$ be a cycle of $W_{q\pi_p(0,b)+1}$ with $V(C_{q\pi_p(0,b)})=\{v_1,v_2,\ldots,v_{q\pi_p(0,b)}\}$. Assume that $x_0$ is a vertex of $W_{q\pi_p(0,b)+1}$ for which $x_0$ is adjacent to all vertices of $C_{q\pi_p(0,b)}$ (see Figure~\ref{Wqpi}). Define the function $f:V(W_{q\pi_p(0,b)+1})\to \{0,1,\ldots,q\pi_p(0,b)\}$ by
		$$f(x_0)=0,\text{ and }$$
		$$f(v_i)=i\text{ for }i=1,2,\ldots,q\pi_p(0,b).$$
		Thus, $f$ is a bijective function.
		\begin{figure}[h]\centering
			\begin{tikzpicture}
				\node[circle,draw,scale=0.4,label=90:$x_0$] (v0) at (0,0) {};
				\node[circle,draw,scale=0.4,label=180:$v_1$] (v1) at (-1,-1) {};
				\node[circle,draw,scale=0.4,label=180:$v_2$] (v2) at (-1.5,0) {};
				\node[circle,draw,scale=0.4,label=180:$v_3$] (v3) at (-1,1) {};
				\node[rectangle] (cd) at (0,1) {$\ldots$};
				\node[circle,draw,scale=0.4,label=0:$v_{q\pi_p(0,b)-2}$] (vqpi-2) at (1,1) {};
				\node[circle,draw,scale=0.4,label=0:$v_{q\pi_p(0,b)-1}$] (vqpi-1) at (1.5,0) {};
				\node[circle,draw,scale=0.4,label=0:$v_{q\pi_p(0,b)}$] (vqpi) at (1,-1) {};
				
				\draw[-] (v0)--(v1);
				\draw[-] (v0)--(v2);
				\draw[-] (v0)--(v3);
				\draw[-] (v0)--(vqpi-2);
				\draw[-] (v0)--(vqpi-1);
				\draw[-] (v0)--(vqpi);
				
				\draw[-] (v0)--(v1)--(v2)--(v3)--(cd)--(vqpi-2)--(vqpi-1)--(vqpi)--(v1);
			\end{tikzpicture}
			\caption{Wheel graph $W_{q\pi_p(0,b)+1}$}\label{Wqpi}
		\end{figure}
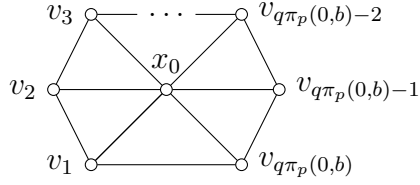

		For the edges of $C_{q\pi_p(0,b)}$, assume that 
		$$\mathbb{F}(i,j)=F_{f(v_{i+(j-1)\pi_p(0,b)})}+F_{f(v_{i+1+(j-1)\pi_p(0,b)})}.$$
		For each $j=1,2,\ldots,q$,
		\begin{align*}
			\mathbb{F}(i,j)&= F_{i+(j-1)\pi_p(0,b)}+F_{i+1+(j-1)\pi_p(0,b)}\\
			&\equiv F_{i}+F_{i+1}\mo{p}\\
			&\equiv F_{i+2}\mo{p}\\
			&\equiv\begin{cases}
				F_{i+2}\mo{p}&\text{ for }i=1,2,\ldots,\pi_p(0,b)-3\\
				F_0\mo{p}&\text{ for }i=\pi_p(0,b)-2\\
				F_1\mo{p}&\text{ for }i=\pi_p(0,b)-1\\
				F_2\mo{p}&\text{ for }i=\pi_p(0,b)\text{ and }j\neq q.
			\end{cases}
		\end{align*}
		Lastly, 
		\begin{align*}
			F_{f(v_{q\pi_p(0,b)})}+F_{f(v_{1})}&=F_{q\pi_p(0,b)}+F_1\\
			&\equiv F_0+F_1\mo{p}\\
			&\equiv F_2\mo{p}.
		\end{align*}
		Evaluating the Legendre symbol $(F_i/p)$ for $i=0,1,\ldots,\pi_p(0,b)-1$, it follows that the number of edges of $C_{q\pi_p(0,b)}$ with label $0$ and $1$ are $q[|\Lambda_{-1}^p(0,b)|+|\Lambda_{0}^p(0,b)|]$ and $q|\Lambda_1^p(0,b)|$, respectively, under $f_p^*$.
		
		Note that $F_0=0$. For the remaining edges, for each $j=1,2,\ldots,q$,
		$$
		F_{f(x_0)}+F_{f(v_{i+(j-1)\pi_p(0,b)})}=F_0+F_{i+(j-1)\pi_p(0,b)}\equiv F_i\mo{p}$$
		for $i=1,2,\ldots,\pi_p(0,b)-1$, and
		$$F_{f(x_0)}+F_{f(v_{\pi_p(0,b)+(j-1)\pi_p(0,b)})}=F_0+F_{\pi_p(0,b)+(j-1)\pi_p(0,b)}\equiv F_0\mo{p}.$$
		Calculating the Legendre symbol $(F_i/p)$ for $i=0,1,\ldots,\pi_p(0,b)-1$, the number of edges with label $0$ and $1$ are $q[|\Lambda_{-1}^p(0,b)|+|\Lambda_{0}^p(0,b)|]$ and $q|\Lambda_{1}^p(0,b)|$, respectively, under $f_p^*$.
		
		Consequently,
		$$e_{f_p^*}(0)=2q[|\Lambda_{-1}^p(0,b)|+|\Lambda_{0}^p(0,b)|]\text{ and }e_{f_p^*}(1)=2q|\Lambda_1^p(0,b)|.$$
		It follows that $|e_{f_p^*}(0)-e_{f_p^*}(1)|=0$ because $p$ is a $0$-PL prime relative to $(0,b)$. Therefore, $W_{q\pi_p(0,b)+1}$ is a $(0,b)$-FLC graph modulo $p$.
	\end{proof}
	\begin{theorem}\normalfont\label{thm4}
		Assume that $0\in\chi(a,b)$. Let $G$ be a connected graph and let $q$ be an integer with $q\geq 1$. The following graphs are $(a,b)$-FLC graphs modulo $p$, where $p$ is a $0$-PL prime relative to $(a,b)$.
		
		\begin{enumerate}
			\item[i.] Lexicographic product graph $C_{q\pi_p(a,b)}\otimes G$ with $p\equiv\pm 1\mo{8}$
			\item[ii.] Cartesian product graph $C_{q\pi_p(a,b)}\square G$ with $p\equiv\pm 1\mo{8}$
			\item[iii.] Tensor product graph $C_{q\pi_p(a,b)}\times G$
			\item[iv.] Strong product graph $C_{q\pi_p(a,b)}\boxtimes G$ with $p\equiv\pm 1\mo{8}$
		\end{enumerate}
	\end{theorem}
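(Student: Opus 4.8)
The plan is to treat all four products with a single labeling and to reduce every edge to one of two residue types. Write $n=q\pi_p(a,b)$, let $C_n$ have vertices $c_1,\dots,c_n$ with edges $c_sc_{s+1}$ (indices taken cyclically, so $c_{n+1}=c_1$), and let $V(G)=\{w_1,\dots,w_m\}$. On the common vertex set $V(C_n)\times V(G)$ I would define the bijection $f$ by $f\big((c_s,w_t)\big)=(s-1)+(t-1)n$, which clearly runs through $\{0,1,\dots,nm-1\}$. The decisive observation is that the second coordinate contributes a multiple of the period: since $(t-1)n$ is a multiple of $\pi_p(a,b)$, we have $F_{f((c_s,w_t))}=F_{(s-1)+(t-1)n}\equiv F_{s-1}\pmod p$. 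Hence the residue attached to a vertex depends only on its cycle-coordinate $s$, and the label of any edge whose endpoints have cycle-coordinates $s$ and $s'$ is governed by $F_{s-1}+F_{s'-1}\pmod p$.

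This reduces the whole problem to two elementary counts. First, for a \emph{cycle-direction} edge, whose endpoints sit on consecutive cycle-coordinates $s$ and $s+1$, the sum is $F_{s-1}+F_{s}\equiv F_{s+1}\pmod p$; for the wrap edge $c_nc_1$ one uses $F_{n-1}+F_0\equiv F_{n-1}+F_n=F_{n+1}\equiv F_1\pmod p$. Thus as $s$ ranges over one full traversal of the cycle the exponents run over the $n=q\pi_p(a,b)$ consecutive values $1,2,\dots,n$, i.e.\ exactly $q$ complete Pisano periods, so by Definition~\ref{PL} the symbols $(F_{s+1}/p)$ equal $1$ in exactly $q\,|\Lambda_1^p(a,b)|$ cases and lie in $\{-1,0\}$ in exactly $q\big(|\Lambda_{-1}^p(a,b)|+|\Lambda_0^p(a,b)|\big)$ cases. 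Because $p$ is a $0$-PL prime relative to $(a,b)$ these two totals coincide, so each such cycle strand of edges is perfectly balanced, using nothing about $(2/p)$. Second, for a \emph{copy-of-$G$} edge, whose endpoints share the same cycle-coordinate $s$, the sum is $F_{s-1}+F_{s-1}=2F_{s-1}$, and $\big(2F_{s-1}/p\big)=(2/p)(F_{s-1}/p)$ by Theorem~\ref{multiplicative}. Invoking Theorem~\ref{2/p} with $p\equiv\pm1\pmod 8$ gives $(2/p)=1$, so the symbol reduces to $(F_{s-1}/p)$, and summing over $s=1,\dots,n$ again yields the balanced split $q\,|\Lambda_1^p(a,b)|$ versus $q\big(|\Lambda_{-1}^p(a,b)|+|\Lambda_0^p(a,b)|\big)$; here the congruence condition is exactly what is needed.

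With these two blocks the four cases assemble quickly. In the tensor product $C_n\times G$ every edge is cycle-direction: each pair consisting of a cycle-edge and a $G$-edge produces two diagonals, and both reduce to the same $F_{s+1}\pmod p$, so the edge-label totals are $2|E(G)|$ times the balanced cycle count and no constraint on $(2/p)$ arises, which is precisely why (iii) carries no congruence hypothesis. In the Cartesian product $C_n\square G$ the edges split into $m$ cycle strands (balanced by the first block) together with, for each $s$, one copy of $G$ made of copy-of-$G$ edges (balanced by the second block); the latter forces $p\equiv\pm1\pmod8$, giving (ii). The lexicographic product $C_n\otimes G$ is identical except that each cycle-edge now carries all $m^2$ pairs $(w_t,w_{t'})$, each still of residue $F_{s+1}$, alongside the copies of $G$, so (i) again needs $p\equiv\pm1\pmod8$. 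Finally, since $E(C_n\boxtimes G)=E(C_n\square G)\cup E(C_n\times G)$ is a disjoint union, balance is inherited from the two previous cases, with the Cartesian part imposing $p\equiv\pm1\pmod8$, giving (iv). In every case $|e_{f_p^*}(0)-e_{f_p^*}(1)|=0$.

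The main obstacle is not any single long computation but getting the two structural reductions exactly right. The delicate point in the first block is the wrap-around edge: one must check that replacing $F_0$ by $F_n$ turns its sum into the shifted value $F_{n+1}\equiv F_1$, so that the $n$ cycle sums form precisely $q$ complete periods of $(F_\cdot/p)$ and the $0$-PL identity converts into an exact equality of counts. The delicate point in the second block is recognizing that all edges sharing a cycle-coordinate collapse to the single residue $2F_{s-1}$, which isolates the role of $(2/p)$ and thereby explains the asymmetry between (iii) and the other three items. Once these observations are in place, the per-strand balance and the disjoint-union structure of $\boxtimes$ make the remaining bookkeeping routine.
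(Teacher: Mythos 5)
Your proposal is correct and follows essentially the same route as the paper's proof: the identical product labeling $f((c_s,w_t))=(s-1)+(t-1)q\pi_p(a,b)$, the same reduction of every vertex residue to its cycle coordinate, the same two-block split into consecutive-coordinate edges (sums $F_{s+1}$ sweeping $q$ full Pisano periods, wrap edge included) and equal-coordinate edges (sums $2F_{s-1}$ handled via Theorems~\ref{multiplicative} and \ref{2/p}), and the same assembly of the four products with the disjoint-union argument for $\boxtimes$. Your explanation of why the tensor product needs no congruence condition on $p$ matches the paper's reasoning exactly.
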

	
	\begin{proof}
		Assume that $G$ has order $n$ and size $m$. In addition, suppose that $V(C_{q\pi_p(a,b)})=\{u_0,u_1,\ldots,u_{q\pi_p(a,b)-1}\}$ (see Figure~\ref{Cqpi}) and $V(G)=\{v_1,v_2,\ldots,v_n\}$. Now, let $f:V(C_{q\pi_p(a,b)}\star G)\to \{0,1,\ldots,nq\pi_p(a,b)-1\}$ be a function defined by
		$$f((u_j,v_i))=j+(i-1)q\pi_p(a,b),$$
		where $\star\in \{\otimes, \square, \times, \boxtimes\}$, for $i=1,2,\ldots,n$ and $j=0,1,\ldots,q\pi_p(a,b)-1$. Clearly, $f$ is a bijective function.
		
		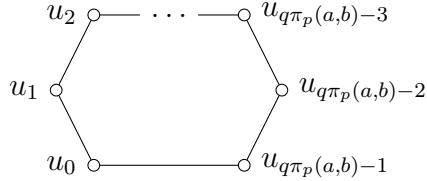
\begin{figure}[h]\centering
			\begin{tikzpicture}
				\node[circle,draw,scale=0.4,label=180:$u_0$] (v1) at (-1,-1) {};
				\node[circle,draw,scale=0.4,label=180:$u_1$] (v2) at (-1.5,0) {};
				\node[circle,draw,scale=0.4,label=180:$u_2$] (v3) at (-1,1) {};
				\node[rectangle] (cd) at (0,1) {$\ldots$};
				\node[circle,draw,scale=0.4,label=0:$u_{q\pi_p(a,b)-3}$] (vqpi-2) at (1,1) {};
				\node[circle,draw,scale=0.4,label=0:$u_{q\pi_p(a,b)-2}$] (vqpi-1) at (1.5,0) {};
				\node[circle,draw,scale=0.4,label=0:$u_{q\pi_p(a,b)-1}$] (vqpi) at (1,-1) {};

				\draw[-](v1)--(v2)--(v3)--(cd)--(vqpi-2)--(vqpi-1)--(vqpi)--(v1);
			\end{tikzpicture}
			\caption{Cycle graph $C_{q\pi_p(a,b)+1}$}\label{Cqpi}
		\end{figure}
		
		For (i), (ii), and (iv), consider the edges of the form $(u_l,v_r)(u_l,v_s)$ for $v_rv_s\in E(G)$ and $l=0,1,\ldots,q\pi_p(a,b)-1$. Let
		$$\mathbb{F}_1(j,t)=F_{f((u_{j+(t-1)\pi_p(a,b)},v_r))}+F_{f((u_{j+(t-1)\pi_p(a,b)},v_s))}.$$
		So, for each $t=1,2,\ldots,q$,
		\begin{align*}
			\mathbb{F}_1(j,t)&=F_{j+(t-1)\pi_p(a,b)+(r-1)q\pi_p(a,b)}+F_{j+(t-1)\pi_p(a,b)+(s-1)q\pi_p(a,b)}\\
			&\equiv F_j+F_j\mo{p}\\
			&\equiv 2F_j\mo{p}
		\end{align*}
		for $j=0,1,\ldots,\pi_p(a,b)-1$. Since $p\equiv \pm 1\mo{8}$, by Theorems~\ref{multiplicative} and \ref{2/p}, $(2F_j/p)=(F_j/p)$ for $j=0,1,\ldots,\pi_p(a,b)-1$. Because $G$ has size $m$, it is clear that the number of edges of $G$ with label $0$ and $1$ are $mq[|\Lambda_{-1}^p(a,b)|+|\Lambda_{0}^p(a,b)|]$ and $mq|\Lambda_1^p(a,b)|$, respectively, under $f_p^*$.
		
		For the edges of the form $(a,v_r)(b,v_s)$, where $ab\in E(C_{q\pi_p(a,b)})$ and the variables $r$ and $s$ depends on (i), (ii), (iii), and (iv), assume that
		$$\mathbb{F}_2(j,t)=F_{f((u_{j+(t-1)\pi_p(a,b)},v_r))}+F_{f((u_{j+1+(t-1)\pi_p(a,b)},v_s))}$$
		Thus, for each $t=1,2,\ldots,q$,
		\begin{align*}
			\mathbb{F}_2(j,t)&=F_{j+(t-1)\pi_p(a,b)+(r-1)q\pi_p(a,b)}+F_{j+1+(t-1)\pi_p(a,b)+(s-1)q\pi_p(a,b)}\\
			&\equiv F_{j}+F_{j+1}\mo{p}\\
			&\equiv F_{j+2}\mo{p}\\
			&\equiv\begin{cases}
				F_{j+2}\mo{p}&\text{ for }j=0,1,\ldots,\pi_p(a,b)-3\\
				F_0\mo{p}&\text{ for }j=\pi_p(a,b)-2\\
				F_1\mo{p}&\text{ for }j=\pi_p(a,b)-1\text{ and }t\neq q.
			\end{cases}
		\end{align*}
		Lastly,
		\begin{align*}
			F_{f((u_{q\pi_p(a,b)-1},v_r))}+F_{f((u_{0},v_s))}&=F_{q\pi_p(a,b)-1}+F_{0}\\
			&\equiv F_{q\pi_p(a,b)-1}+F_{\pi_p(a,b)}\mo{p}\\
			&\equiv F_{\pi_p(a,b)+1}\mo{p}\\
			&\equiv F_1\mo{p}.
		\end{align*}
		Hence, we calculate the Legendre symbol $(F_i/p)$ for $i=0,1,\ldots,\pi_p(a,b)-1$. Also, observe that $|\Lambda_{1}^p(a,b)|-|\Lambda_{0}^p(a,b)|-|\Lambda_{-1}^p(a,b)|=0$ since $p$ is a $0$-PL prime relative to $(a,b)$.
		
		For (i), we have $r,s=1,2,\ldots,n$. As a result,
		\begin{align*}
			e_{f_p^*}(0)&=mq[|\Lambda_{-1}^p(a,b)|+|\Lambda_{0}^p(a,b)|]+n^2q[|\Lambda_{-1}^p(a,b)|+|\Lambda_{0}^p(a,b)|]\\
			&=(mq+n^2q)[|\Lambda_{-1}^p(a,b)|+|\Lambda_{0}^p(a,b)|],\\
			e_{f_p^*}(1)&=m|\Lambda_{1}^p(a,b)|+n^2q|\Lambda_{1}^p(a,b)|\\
			&=(mq+n^2q)|\Lambda_{1}^p(a,b)|.
		\end{align*}
		It is clear that $|e_{f_p^*}(0)-e_{f_p^*}(1)|=0$ and so $C_{q\pi_p(a,b)}\oplus G$ is an $(a,b)$-FLC graph modulo $p$.
		
		For (ii), we have $r=s=t$ for $t=1,2,\ldots,n$. Thus,
		\begin{align*}
			e_{f_p^*}(0)&=mq[|\Lambda_{-1}^p(a,b)|+|\Lambda_{0}^p(a,b)|]+nq[|\Lambda_{-1}^p(a,b)|+|\Lambda_{0}^p(a,b)|]\\
			&=(mq+nq)[|\Lambda_{-1}^p(a,b)|+|\Lambda_{0}^p(a,b)|],\\
			e_{f_p^*}(1)&=mq|\Lambda_{1}^p(a,b)|+nq|\Lambda_{1}^p(a,b)|\\
			&=(mq+nq)|\Lambda_{1}^p(a,b)|.
		\end{align*}
		Clearly, $|e_{f_p^*}(0)-e_{f_p^*}(1)|=0$. Therefore, $C_{q\pi_p(a,b)}\square G$ is an $(a,b)$-FLC graph modulo $p$. 
		
		For (iii), $v_sv_r\in E(G)$. Because $$(u_j,v_r)(u_{j+1},v_s),(u_j,v_s)(u_{j+1},v_r)\in E(C_{q\pi_p}\times G)$$ and the size of $G$ is $m$, we have
		\begin{align*}
			e_{f_p^*}(0)&=2mq[|\Lambda_{-1}^p(a,b)|+|\Lambda_{0}^p(a,b)|],\\
			e_{f_p^*}(1)&=2mq|\Lambda_{1}^p(a,b)|.
		\end{align*}
		It is evident that $|e_{f_p^*}(0)-e_{f_p^*}(1)|=0$. Therefore, $C_{q\pi_p(a,b)}\times G$ is an $(a,b)$-FLC graph modulo $p$. 
		
		For (iv), note that $E(C_{q\pi_p(a,b)}\boxtimes G)=E(C_{q\pi_p(a,b)}\square G)\cup E(C_{q\pi_p(a,b)}\times G)$ and $E(C_{q\pi_p(a,b)}\square G)\cap E(C_{q\pi_p(a,b)}\times G)=\varnothing$. Thus, combining (ii) and (iii), it follows that
		\begin{align*}
			e_{f_p^*}(0)&=(mq+nq)[|\Lambda_{-1}^p(a,b)|+|\Lambda_{0}^p(a,b)|]+2mq[|\Lambda_{-1}^p(a,b)|+|\Lambda_{0}^p(a,b)|]\\
			&=(3mq+nq)[|\Lambda_{-1}^p(a,b)|+|\Lambda_{0}^p(a,b)|],\\
			e_{f_p^*}(1)&=(mq+nq)|\Lambda_{1}^p(a,b)|+2mq|\Lambda_{1}^p(a,b)|\\
			&=(3mq+nq)|\Lambda_{1}^p(a,b)|.
		\end{align*}
		Hence, $|e_{f_p^*}(0)-e_{f_p^*}(1)|=0$ and so $C_{q\pi_p(a,b)}\boxtimes G$ is an $(a,b)$-FLC graph modulo $p$.
	\end{proof}
	
	\begin{theorem}\normalfont\label{thm5}
		Let $k\geq 1$ be an integer with $k\in\chi(0,b)$. Suppose that $G$ is a connected graph of order $n$ and size $n(2k-1)+\varepsilon$, where $\varepsilon\in \{-1,0,1\}$. Moreover, let $(F_1/p)=(F_2/p)=1$. Then the corona graph $G\circ P_{\pi_p(0,b)-1}$ is a $(0,b)$-FLC graph modulo $p$, where $p$ is a $k$-PL prime relative to $(0,b)$. 
	\end{theorem}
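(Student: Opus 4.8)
The plan is to exhibit one explicit bijective labeling and then count the induced edge labels by edge type, exploiting two structural facts: that $F_0=0$ (since $a=0$) and the Pisano periodicity $F_{m+n\pi_p(0,b)}\equiv F_m\mo{p}$. Write $\ell=\pi_p(0,b)$ for brevity, let $w_1,\ldots,w_n$ be the vertices of the single copy of $G$, and let $v_1^i,\ldots,v_{\ell-1}^i$ be the vertices of the $i$th copy of $P_{\ell-1}$, so that $w_i$ is joined to every $v_t^i$. The total order is $n+n(\ell-1)=n\ell$. I would define
$$f(w_i)=(i-1)\ell,\qquad f(v_t^i)=(i-1)\ell+t,$$
for $i=1,\ldots,n$ and $t=1,\ldots,\ell-1$. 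Since the blocks $\{(i-1)\ell,\ldots,i\ell-1\}$ partition $\{0,1,\ldots,n\ell-1\}$, this is a bijection, and by periodicity $F_{f(w_i)}\equiv F_0\equiv 0$ and $F_{f(v_t^i)}\equiv F_t\mo{p}$.

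Next I would classify the three kinds of edges. (a) For any edge $w_iw_j$ of the copy of $G$, $F_{f(w_i)}+F_{f(w_j)}\equiv F_0+F_0=0\mo{p}$, so every such edge is labeled $0$; there are $m=n(2k-1)+\varepsilon$ of them. (b) For the spokes $w_iv_t^i$ we have $F_{f(w_i)}+F_{f(v_t^i)}\equiv F_0+F_t=F_t\mo{p}$, so the label is governed by $(F_t/p)$ as $t$ runs over $1,\ldots,\ell-1$; relative to the full index set $\{0,\ldots,\ell-1\}$ this omits only $t=0\in\Lambda_0^p(0,b)$, so per copy these contribute $|\Lambda_1^p(0,b)|$ edges labeled $1$ and $|\Lambda_{-1}^p(0,b)|+|\Lambda_0^p(0,b)|-1$ edges labeled $0$. (c) For the path edges $v_t^iv_{t+1}^i$, $t=1,\ldots,\ell-2$, we get $F_{f(v_t^i)}+F_{f(v_{t+1}^i)}\equiv F_t+F_{t+1}=F_{t+2}\mo{p}$, and as $t+2$ runs over $3,\ldots,\ell$ (with $F_\ell\equiv F_0$) the indices sweep out exactly $\{0,1,\ldots,\ell-1\}\setminus\{1,2\}$.

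Step (c) is where the hypothesis $(F_1/p)=(F_2/p)=1$ enters: because $1,2\in\Lambda_1^p(0,b)$, deleting these two indices removes exactly two quadratic residues, giving $|\Lambda_1^p(0,b)|-2$ path edges labeled $1$ and $|\Lambda_{-1}^p(0,b)|+|\Lambda_0^p(0,b)|$ labeled $0$ per copy. Multiplying the counts in (b) and (c) by the $n$ copies and adding (a) yields
$$e_{f_p^*}(1)=2n\bigl(|\Lambda_1^p(0,b)|-1\bigr),\qquad e_{f_p^*}(0)=2n\bigl(|\Lambda_{-1}^p(0,b)|+|\Lambda_0^p(0,b)|\bigr)-n+m.$$
Substituting $m=n(2k-1)+\varepsilon$ and then invoking the defining relation $k=|\Lambda_1^p(0,b)|-|\Lambda_{-1}^p(0,b)|-|\Lambda_0^p(0,b)|$ of a $k$-PL prime, the difference collapses to $e_{f_p^*}(0)-e_{f_p^*}(1)=\varepsilon$, so that $|e_{f_p^*}(0)-e_{f_p^*}(1)|=|\varepsilon|\leq 1$ and the graph is $(0,b)$-FLC modulo $p$.

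The bookkeeping in (a)–(c) is routine; the two points requiring care are the index-sweep in (c) and the observation that the two omitted indices are precisely $\{1,2\}$. This is exactly why the hypothesis $(F_1/p)=(F_2/p)=1$ is imposed (so those two deletions come out of $\Lambda_1^p(0,b)$) and why the size of $G$ is calibrated as $n(2k-1)+\varepsilon$: that choice is what makes the $k$-PL relation cancel all the $\Lambda$-terms and leave only the slack $\varepsilon$. As a sanity check I would also verify $e_{f_p^*}(0)+e_{f_p^*}(1)=m+n(2\ell-3)$, the total number of edges of $G\circ P_{\ell-1}$.
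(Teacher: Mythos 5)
Your proposal is correct and follows essentially the same route as the paper's proof: the identical block labeling $f(v_t^i)=(i-1)\pi_p(0,b)+t$ (with the $G$-vertex getting the residue-$0$ slot), the same three-way edge classification, the same observation that the path-edge indices sweep $\{0,1,\ldots,\pi_p(0,b)-1\}\setminus\{1,2\}$ so the hypothesis $(F_1/p)=(F_2/p)=1$ removes exactly two residues, and the same cancellation $e_{f_p^*}(0)-e_{f_p^*}(1)=\varepsilon$ via the $k$-PL relation. Your closing sanity check $e_{f_p^*}(0)+e_{f_p^*}(1)=m+n(2\pi_p(0,b)-3)$ is a small addition beyond the paper's argument, but the substance is the same.
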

	
	\begin{proof}
		Let $V(G)=\{v_0^1,v_0^2,\ldots,v_0^n\}$ and let  $P_{\pi_p(0,b)-1}^j$ be the $j$th copy of $P_{\pi_p(0,b)-1}$ with $V(P_{\pi_p(0,b)-1}^j)=\{v_1^j,v_2^j\ldots,v_{\pi_p(0,b)-1}^j\}$ for $j=1,2,\ldots,n$ (see Figure~\ref{Ppi}). Note that in the corona graph $G\circ P_{\pi_p(0,b)-1}$, $v_0^jv_i^j\in E(G\circ P_{\pi_p(0,b)-1})$ for $i=1,2,\ldots,\pi_p(0,b)-1$ and $j=1,2,\ldots,n$. Now, let $f:V(G\circ P_{\pi_p(0,b)-1})\to \{0,1,\ldots,n\pi_p(0,b)-1\}$ be a function defined by
		$$f(v_i^j)=i+(j-1)\pi_p(0,b)$$
		for $i=0,1,\ldots,\pi_p(0,b)-1$ and $j=1,2,\ldots,n$. Hence, $f$ is a bijective function.
		
		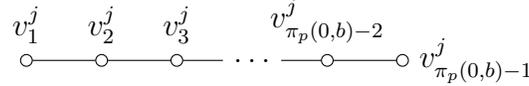
\begin{figure}[h]\centering
			\begin{tikzpicture}
				\node[circle,draw,scale=0.4,label=90:$v_1^j$] (v1) at (0,0) {};
				\node[circle,draw,scale=0.4,label=90:$v_2^j$] (v2) at (1,0) {};
				\node[circle,draw,scale=0.4,label=90:$v_3^j$] (v3) at (2,0) {};
				\node[rectangle] (cd) at (3,0) {$\ldots$};
				\node[circle,draw,scale=0.4,label=90:$v_{\pi_p(0,b)-2}^j$] (vqpi-1) at (4,0) {};
				\node[circle,draw,scale=0.4,label=0:$v_{\pi_p(0,b)-1}^j$] (vqpi) at (5,0) {};
				\draw[-] (v1)--(v2)--(v3)--(cd)--(vqpi-1)--(vqpi);
			\end{tikzpicture}
			\caption{Path graph $P_{\pi_p(0,b)-1}^j$}\label{Ppi}
		\end{figure}
		
		Note that $F_0=0$. For the edges of $G$, if $v_0^rv_0^s\in E(G)$, then 
		$$F_{f(v_0^r)}+F_{f(v_0^s)}=F_{(r-1)\pi_p(0,b)}+F_{(s-1)\pi_p(0,b)}\equiv 2F_0\equiv 0\mo{p}$$
		and it follows that all edges of $G$ are labeled by $0$ under $f_p^*$.
		
		For the edges of $P_{\pi_p(0,b)-1}^j$, let
		$$\mathbb{F}_1(i,j)=F_{f(v_i^j)}+F_{f(v_{i+1}^j)}.$$
		Thus, for each $j=1,2,\ldots,n$,
		\begin{align*}
			\mathbb{F}_1(i,j)&=F_{i+(j-1)\pi_p(0,b)}+F_{i+1+(j-1)\pi_p(0,b)}\\
			&\equiv F_i+F_{i+1}\mo{p}\\
			&\equiv F_{i+2}\mo{p}\\
			&\equiv\begin{cases}
				F_{i+2}\mo{p}&\text{ for }i=1,2,\ldots,\pi_p(0,b)-3\\
				F_0\mo{p}&\text{ for }i=\pi_p(0,b)-2.
			\end{cases}
		\end{align*}
		By calculating the Legendre symbol $(F_i/p)$ for $i=0,3,4,\ldots,\pi_p(0,b)-1$ and noting that $1,2\in \Lambda^p_{1}(0,b)$, then the number of edges of $P_{\pi_p(0,b)-1}^j$ with label $0$ and $1$, are $|\Lambda^p_{-1}(0,b)|+|\Lambda^p_{0}(0,b)|$ and $|\Lambda^p_{1}(0,b)|-2$, respectively, under $f_p^*$, for $j=1,2,\ldots,n$.
		
		For the edges of the form $v_0^jv_i^j$, suppose that
		$$\mathbb{F}_2(i,j)=F_{f(v_0^j)}+F_{f(v_i^j)}.$$
		Hence,
		\begin{align*}
			\mathbb{F}_2(i,j)&=F_{(j-1)\pi_p(0,b)}+F_{i+(j-1)\pi_p(0,b)}\\
			&\equiv F_i\mo{p}
		\end{align*}
		for $i=1,2,\ldots,\pi_p(0,b)-1$ and $j=1,2,\ldots,n$. Evaluating the Legendre symbol $(F_i/p)$ for $i=1,2,\ldots,\pi_p(0,b)-1$ and noting that $0\in \Lambda^p_0(0,b)$, then the number of edges of the form $v_0^jv_i^j$ with label $0$ and $1$ are $n[|\Lambda^p_{-1}(0,b)|+\Lambda^p_0(0,b)-1]$ and $n|\Lambda^p_1(0,b)|$, respectively, under $f_p^*$. 
		
		Consequently,
		\begin{align*}
			e_{f_p^*}(0)&=n(2k-1)+\varepsilon+n[|\Lambda^p_{-1}(0,b)|+|\Lambda^p_{0}(0,b)|]+n[|\Lambda^p_{-1}(0,b)|+|\Lambda^p_{0}(0,b)|-1]\\
			&=n(2k-1)+\varepsilon+n[2|\Lambda^p_{-1}(0,b)|+2|\Lambda^p_{0}(0,b)|-1],\text{ and }\\
			e_{f_p^*}(1)&=n[|\Lambda^p_{1}(0,b)|-2]+n|\Lambda^p_{1}(0,b)|\\
			&=n[2|\Lambda^p_{1}(0,b)|-2].
		\end{align*}
		Given that $p$ is a $k$-PL prime relative to $(0,b)$, that is, $k=|\Lambda^p_{1}(0,b)|-|\Lambda^p_{-1}(0,b)|-|\Lambda^p_{0}(0,b)|$, we have
		\begin{align*}
			e_{f_p^*}(0)-e_{f_p^*}(1)&=n(2k-1)+\varepsilon+n[2|\Lambda^p_{-1}(0,b)|+2|\Lambda^p_{0}(0,b)|-1-2|\Lambda^p_1(0,b)|+2]\\
			&=n(2k-1)+\varepsilon+n[-2(|\Lambda^p_{1}(0,b)|-|\Lambda^p_{-1}(0,b)|-|\Lambda^p_{0}(0,b)|)+1]\\
			&=n(2k-1)+\varepsilon+n(-2k+1)\\
			&=\varepsilon.
		\end{align*}
		Because $\varepsilon\in \{-1,0,1\}$, $|e_{f_p^*}(0)-e_{f_p^*}(1)|\leq 1$ and it follows that $G\circ P_{\pi_p(0,b)-1}$ is a $(0,b)$-FLC graph modulo $p$.
	\end{proof}

	\begin{theorem}\normalfont\label{thm6}
		Assume that $k\geq -1$ is an integer with $k\in\chi(0,b)$. Let $H=\bigcup_{j=1}^{\pi_p(0,b)-1}G^j$
		where $G^1,G^2,\ldots,G^{\pi_p(0,b)-1}$ are vertex-disjoint graphs of the same order $\pi_p(0,b)-1$ and the same size $m$. In addition, suppose that $G$ is a graph of order $\pi_p(0,b)-1$ and size $(k+1)(m+(\pi_p(0,b)-1)^2)+\varepsilon$, where $\varepsilon\in \{-1,0,1\}$. Then the join graph $G+H$ is a $(0,b)$-FLC graph modulo $p$, where $p$ is a $k$-PL prime relative to $(0,b)$ and $p\equiv \pm 1\mo{8}$. In addition, when $m=0$, the condition $p\equiv \pm 1\mo{8}$ is no longer required.
	\end{theorem}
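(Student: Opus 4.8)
The plan is to exhibit an explicit bijective labeling organized by residue classes modulo $\pi:=\pi_p(0,b)$, and then to count the three edge-types of $G+H$ separately. Write $A=|\Lambda_1^p(0,b)|$, $B=|\Lambda_{-1}^p(0,b)|$, and $C=|\Lambda_0^p(0,b)|$, so that $A+B+C=\pi$ and $k=A-B-C$; recall also that $F_0=0$ forces $0\in\Lambda_0^p(0,b)$, hence $C\geq 1$. Since $G+H$ has order $(\pi-1)+(\pi-1)^2=\pi(\pi-1)$, the available labels are exactly $\{0,1,\ldots,\pi(\pi-1)-1\}$, and each residue class modulo $\pi$ occurs among them exactly $\pi-1$ times. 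I would therefore dedicate one residue class to each of the $\pi$ vertex-groups: label the $\pi-1$ vertices of $G$ by $0,\pi,2\pi,\ldots,(\pi-2)\pi$ (all $\equiv 0\pmod\pi$), and label the $\pi-1$ vertices of the copy $G^j$ by $j,\pi+j,\ldots,(\pi-2)\pi+j$ (all $\equiv j\pmod\pi$) for $j=1,2,\ldots,\pi-1$. By the periodicity $F_{m+n\pi}\equiv F_m\pmod p$, every vertex of $G$ then has $F\equiv F_0=0$ and every vertex of $G^j$ has $F\equiv F_j\pmod p$; a block/residue count shows this $f$ is a bijection.

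With this labeling the three edge classes evaluate cleanly, where I abbreviate $M=m+(\pi-1)^2$. First, every edge of $G$ joins two vertices with $F\equiv 0$, so its sum is $\equiv 0\pmod p$ and it is labeled $0$; this puts all $|E(G)|=(k+1)M+\varepsilon$ edges into $e_{f_p^*}(0)$. Second, each edge inside $G^j$ joins two vertices with $F\equiv F_j$, so its sum is $\equiv 2F_j\pmod p$; here I invoke Theorems~\ref{multiplicative} and \ref{2/p}, which give $(2F_j/p)=(F_j/p)$ precisely because $p\equiv\pm1\pmod 8$, so such an edge is labeled $1$ when $j\in\Lambda_1^p(0,b)$ and $0$ otherwise. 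Third, each join edge connects a $G$-vertex ($F\equiv 0$) to a $G^j$-vertex ($F\equiv F_j$), so its sum is $\equiv F_j$ and is labeled according to $(F_j/p)$ in the same fashion.

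Counting over $j=1,\ldots,\pi-1$ and using $0\in\Lambda_0^p(0,b)$ (so that among these indices there are exactly $A$ in $\Lambda_1^p(0,b)$, $B$ in $\Lambda_{-1}^p(0,b)$, and $C-1$ in $\Lambda_0^p(0,b)$), each copy $G^j$ carries $m$ internal edges and $(\pi-1)^2$ join edges, so a label-$1$ copy contributes $M=m+(\pi-1)^2$ edges to $e_{f_p^*}(1)$ and a label-$0$ copy contributes $M$ to $e_{f_p^*}(0)$. This yields $e_{f_p^*}(1)=A\,M$ and $e_{f_p^*}(0)=(k+1)M+\varepsilon+(B+C-1)M$. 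Substituting $A=k+B+C$ collapses the second expression to $A\,M+\varepsilon$, so $e_{f_p^*}(0)-e_{f_p^*}(1)=\varepsilon$ and $|e_{f_p^*}(0)-e_{f_p^*}(1)|=|\varepsilon|\leq 1$, giving the labeling. For the final refinement, when $m=0$ there are no edges inside any $G^j$, so the step invoking $(2F_j/p)=(F_j/p)$ never arises; the identical count goes through with $M=(\pi-1)^2$, and the hypothesis $p\equiv\pm1\pmod 8$ may be dropped.

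The bijection verification and the three edge totals are routine bookkeeping; the one point demanding genuine care—and the natural main obstacle—is the internal $G^j$ edges, whose sums are $2F_j$ rather than $F_j$, so that the argument really hinges on $p\equiv\pm1\pmod 8$ to preserve the Legendre value. Isolating this as the \emph{sole} place where that congruence enters is exactly what makes the $m=0$ case fall out for free. I would also remark that $k\geq -1$ guarantees $(k+1)M+\varepsilon\geq 0$, so the prescribed size of $G$ is admissible and all edge counts stay nonnegative.
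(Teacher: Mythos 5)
Your proof is correct and follows essentially the same route as the paper: the residue-class labeling you construct (labels $\equiv 0 \pmod{\pi}$ on $G$, labels $\equiv j \pmod{\pi}$ on $G^j$) is exactly the paper's bijection, and your three-way edge count, the use of Theorems~\ref{multiplicative} and \ref{2/p} only for the internal $G^j$ edges, and the collapse of $e_{f_p^*}(0)-e_{f_p^*}(1)$ to $\varepsilon$ all match the paper's argument. Your explicit isolation of where $p\equiv\pm 1\pmod 8$ enters (justifying the $m=0$ refinement) and the remark that $k\geq -1$ keeps the prescribed size of $G$ nonnegative are small clarifying additions, not departures.
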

	
	\begin{proof}
		Let $V(G)=\{v_0^1,v_0^2,\ldots,v_0^{\pi_p(0,b)-1}\}$. Suppose that $V(G^j)=\{v_1^j,v_2^j,\ldots,v_n^j\}$ for $j=1,2,\ldots,\pi_p(0,b)-1$. Let $f:V(G+H)\to \{0,1,\ldots,\pi_p(0,b)(\pi_p(0,b)-1)-1\}$ be a function defined by
		\begin{align*}
			f(v_i^j)&=j+(i-1)\pi_p(0,b)\text{ for }i=1,2,\ldots,\pi_p(0,b)-1,\\
			f(v_0^j)&=(j-1)\pi_p(0,b),
		\end{align*}
		for $j=1,\ldots,\pi_p(0,b)-1$. Evidently, $f$ is a bijective function. 
		
		Note that $F_0=0$. For the edges of $G$, for any edge $v_0^rv_0^s\in E(G)$, we have 
		$$F_{f(v_0^r)}+F_{f(v_0^s)}=F_{(r-1)\pi_p(0,b)}+F_{(s-1)\pi_p(0,b)}\equiv 2F_0\equiv 0\mo{p}.$$
		Hence, $f_p^*(e)=0$ for all $e\in E(G)$.
		
		For the edges of $G^j$, if $v_r^jv_s^j\in E(G^j)$, then
		$$F_{f(v_r^j)}+F_{f(v_s^j)}=F_{j+(r-1)\pi_p(0,b)}+F_{j+(s-1)\pi_p(0,b)}\equiv F_j+F_j\equiv 2F_j\mo{p}$$
		for $j=1,2,\ldots,\pi_p(0,b)-1$. Since $p\equiv \pm 1\mo{8}$, by Theorems~\ref{2/p} and \ref{multiplicative}, we have $(2F_j/p)=(F_j/p)$ for $j=1,2,\ldots,\pi_p(0,b)-1$. Because $0\in\Lambda^p_{0}(0,b)$, it follows that there are $m[|\Lambda^p_{-1}(0,b)|+|\Lambda^p_0(0,b)|-1]$ edges of $H$ with label $0$ and $m|\Lambda^p_1(0,b)|$ edges with label $1$, under $f_p^*$.
		
		For the edges of the form $v_0^lv_i^j$, we have
		$$F_{f(v_0^l)}+F_{f(v_i^j)}=F_{(l-1)\pi_p(0,b)}+F_{j+(i-1)\pi_p(0,b)}\equiv F_j\mo{p}$$
		for $i,l,j=1,2,\ldots,\pi_p(0,b)-1$. 
		Calculating the Legendre symbol $(F_j/p)$ for $j=1,2,\ldots,\pi_p(0,b)-1$ and noting that $0\in \Lambda^p_0$, it is evident that the number of edges of the form $v_0^lv_i^j$ with label $0$ and $1$ are $(\pi_p(0,b)-1)^2[|\Lambda^p_{-1}(0,b)|+|\Lambda^p_{0}(0,b)|-1]$ and $(\pi_p(0,b)-1)^2|\Lambda^p_1(0,b)|$, respectively, under $f_p^*$. 
		
		Therefore, 
		\begin{align*}
			e_{f_p^*}(0)&=(k+1)(m+(\pi_p(0,b)-1)^2)+\varepsilon+m[|\Lambda^p_{-1}(0,b)|+|\Lambda^p_0(0,b)|-1]\\
			&\hspace{0.5cm}+(\pi_p(0,b)-1)^2[|\Lambda^p_{-1}(0,b)|+|\Lambda^p_{0}(0,b)|-1]\\
			&=(k+1)(m+(\pi_p(0,b)-1)^2)+\varepsilon\\
			&\hspace{0.5cm}+(m+(\pi_p(0,b)-1)^2)[|\Lambda^p_{-1}(0,b)|+|\Lambda^p_{0}(0,b)|-1],\text{ and }\\
			e_{f_p^*}(1)&=m|\Lambda^p_1(0,b)|+(\pi_p(0,b)-1)^2|\Lambda^p_1(0,b)|\\
			&=(m+(\pi_p(0,b)-1)^2)|\Lambda^p_1(0,b)|.
		\end{align*}
		Note that $p$ is a $k$-PL prime relative to $(0,b)$, that is, $k=|\Lambda^p_1(0,b)|-|\Lambda^p_{-1}(0,b)|-|\Lambda^p_0(0,b)|$. Hence,
		\begin{align*}
			e_{f_p^*}(0)-e_{f_p^*}(1)&=(k+1)(m+(\pi_p(0,b)-1)^2)+\varepsilon\\
			&\hspace{0.5cm}+(m+(\pi_p(0,b)-1)^2)[|\Lambda^p_{-1}(0,b)|+|\Lambda^p_{0}(0,b)|-1-|\Lambda^p_{1}(0,b)|]\\
			&=(k+1)(m+(\pi_p(0,b)-1)^2)+\varepsilon\\
			&\hspace{0.5cm}+(m+(\pi_p(0,b)-1)^2)(-k-1)\\
			&=\varepsilon.
		\end{align*}
		This implies that $|e_{f_p^*}(0)-e_{f_p^*}(1)|\leq 1$ because $\varepsilon\in\{-1,0,1\}$. Consequently, $G+H$ is a $(0,b)$-FLC graph modulo $p$.
	\end{proof}

	\begin{theorem}\normalfont\label{thm7}
		Let $k\geq -1$ be an integer with $k\in\chi(0,b)$. Assume that $H$ is a graph of order $\pi_p(0,b)-1$ and size $m$. Moreover, suppose that $G$ is a connected graph of order $\pi_p(0,b)-1$ and size $(k+1)(m+\pi_p(0,b)-1)+\varepsilon$, where $\varepsilon\in \{-1,0,1\}$. Then the corona graph $G\circ H$ is an $(0,b)$-FLC graph modulo $p$, where $p$ is a $k$-PL prime relative to $(0,b)$ with $p\equiv \pm 1\mo{8}$. Furthermore, in the special case $m=0$, the condition $p\equiv \pm 1\mo{8}$ is unnecessary.
	\end{theorem}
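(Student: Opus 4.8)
The plan is to follow the block-labeling template of Theorems~\ref{thm5} and~\ref{thm6}, adopting the \emph{copy-indexed} labeling of Theorem~\ref{thm6} rather than the vertex-indexed labeling of Theorem~\ref{thm5}, since only the former keeps the within-copy edge values controllable when $H$ is arbitrary. First I would write $V(G)=\{v_0^1,\ldots,v_0^{\pi_p(0,b)-1}\}$ for the centers and let $H^j$ be the $j$th copy of $H$, with $V(H^j)=\{v_1^j,\ldots,v_{\pi_p(0,b)-1}^j\}$, so that $v_0^j$ is joined to every vertex of $H^j$ in $G\circ H$. I would then define the bijection $f:V(G\circ H)\to\{0,1,\ldots,(\pi_p(0,b)-1)\pi_p(0,b)-1\}$ by $f(v_0^j)=(j-1)\pi_p(0,b)$ and $f(v_i^j)=j+(i-1)\pi_p(0,b)$ for $i=1,\ldots,\pi_p(0,b)-1$, verifying bijectivity by noting that the labels fall into $\pi_p(0,b)-1$ consecutive blocks of length $\pi_p(0,b)$.

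The crucial feature of this labeling is that, modulo $\pi_p(0,b)$, every center index is $\equiv 0$ and every vertex of $H^j$ has index $\equiv j$; combined with $F_0=0$ and the periodicity $F_{m+n\pi_p(0,b)}\equiv F_m\mo{p}$, this lets me evaluate the three edge classes cleanly. Edges $v_0^rv_0^s$ of $G$ give $F_0+F_0\equiv 0\mo{p}$, so all $(k+1)(m+\pi_p(0,b)-1)+\varepsilon$ of them receive label $0$. Edges $v_r^jv_s^j$ inside $H^j$ give $F_j+F_j\equiv 2F_j\mo{p}$, where I would invoke Theorems~\ref{multiplicative} and~\ref{2/p} with $p\equiv\pm1\mo{8}$ to get $(2F_j/p)=(F_j/p)$. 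Corona edges $v_0^jv_i^j$ give $F_0+F_j\equiv F_j\mo{p}$. In each of the last two classes the label of an edge depends only on $j$, and since $0\in\Lambda_0^p(0,b)$, the indices $j=1,\ldots,\pi_p(0,b)-1$ distribute among the classes $(F_j/p)=1,-1,0$ with counts $|\Lambda_1^p(0,b)|$, $|\Lambda_{-1}^p(0,b)|$, and $|\Lambda_0^p(0,b)|-1$.

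Counting then yields, exactly as in Theorem~\ref{thm6}, the totals $e_{f_p^*}(0)=(k+1)(m+\pi_p(0,b)-1)+\varepsilon+(m+\pi_p(0,b)-1)[\,|\Lambda_{-1}^p(0,b)|+|\Lambda_0^p(0,b)|-1\,]$ and $e_{f_p^*}(1)=(m+\pi_p(0,b)-1)|\Lambda_1^p(0,b)|$, the matching factor $m+\pi_p(0,b)-1$ arising because each of the $\pi_p(0,b)-1$ copies contributes $m$ internal edges while each of the $\pi_p(0,b)-1$ centers contributes $\pi_p(0,b)-1$ corona edges. Substituting $k=|\Lambda_1^p(0,b)|-|\Lambda_{-1}^p(0,b)|-|\Lambda_0^p(0,b)|$ should collapse the bracket to $-(k+1)$ and force $e_{f_p^*}(0)-e_{f_p^*}(1)=\varepsilon$, giving $|e_{f_p^*}(0)-e_{f_p^*}(1)|\le1$. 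For the case $m=0$ the internal edges of every $H^j$ vanish, so the step using $(2F_j/p)=(F_j/p)$ never occurs and the hypothesis $p\equiv\pm1\mo{8}$ can be dropped.

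The only real subtlety, which I would flag explicitly, is the choice of labeling: the path-style assignment $f(v_i^j)=i+(j-1)\pi_p(0,b)$ of Theorem~\ref{thm5} would send an internal edge $v_r^jv_s^j$ to $F_r+F_s$, a value not controllable for arbitrary $r,s$ when $H$ is a general graph, whereas the copy-indexed assignment makes every internal sum equal to the single value $2F_j$. Once this is in place the argument is the routine three-class bookkeeping and algebraic cancellation already established in Theorems~\ref{thm5} and~\ref{thm6}.
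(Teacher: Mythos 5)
Your proposal is correct and follows essentially the same route as the paper's own proof: the identical copy-indexed bijection $f(v_0^j)=(j-1)\pi_p(0,b)$, $f(v_i^j)=j+(i-1)\pi_p(0,b)$, the same three-class edge analysis ($2F_0\equiv 0$ for edges of $G$, $2F_j$ for edges inside $H^j$ handled via $p\equiv\pm 1\mo{8}$, and $F_j$ for corona edges), and the same counting and cancellation yielding $e_{f_p^*}(0)-e_{f_p^*}(1)=\varepsilon$. Your explicit remark on why the copy-indexed labeling (rather than the path-style labeling of Theorem~\ref{thm5}) is forced when $H$ is arbitrary, and your observation that the $m=0$ case never invokes $(2F_j/p)=(F_j/p)$, are both accurate and consistent with the paper.
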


	\begin{proof}
		Let $V(G)=\{v_0^1,v_0^2,\ldots,v_0^{\pi_p(0,b)-1}\}$. Assume that $H^j$ is the $i$th copy of $H$ with $V(H^j)=\{v_1^j,v_2^j,\ldots,v_{\pi_p(0,b)-1}^j\}$ for $j=1,2,\ldots,\pi_p(0,b)-1$. Note that $v_0^j$ is adjacent to $v_i^j$ for $i,j=1,2,\ldots,\pi_p(0,b)-1$. Now, suppose that $f:V(G\circ H)\to\{0,1,\ldots,\pi_p(0,b)(\pi_p(0,b)-1)-1\}$ is a function defined by
		\begin{align*}
			f(v_i^j)&=j+(i-1)\pi_p(0,b)\text{ for }i=1,2,\ldots,\pi_p(0,b)-1,\\
			f(v_0^j)&=(j-1)\pi_p(0,b),
		\end{align*}
		for $j=1,2,\ldots,\pi_p(0,b)-1$. Thus, $f$ is a bijective function.
		
		Note that $F_0=0$. For the edges of $G$, if $v_0^rv_0^s\in E(G)$, then
		$$F_{f(v_0^r)}+F_{f(v_0^s)}=F_{(r-1)\pi_p(0,b)}+F_{(s-1)\pi_p(0,b)}\equiv 2F_0\equiv 0\mo{p}$$
		which means $f_p^*(e)=0$ for each $e\in E(G)$.
		
		For the edges of $H^j$, if $v_r^jv_s^j\in E(H^j)$, then
		$$F_{f(v_r^j)}+F_{f(v_s^j)}=F_{j+(r-1)\pi_p(0,b)}+F_{j+(s-1)\pi_p(0,b)}\equiv F_j+F_j\equiv 2F_j\mo{p}.$$
		Note that $(2F_j/p)=(F_j/p)$, for $j=1,2,\ldots,\pi_p(0,b)-1$ by Theorems~\ref{2/p} and \ref{multiplicative}. Given that $0\in \Lambda^p_0(0,b)$, it is evident that the number of edges of $\bigcup_{j=1}^{\pi_p(0,b)-1}H^j$ with label $0$ and $1$ are $m[|\Lambda^p_{-1}(0,b)|+|\Lambda^p_0(0,b)|-1]$ and $m|\Lambda^p_1(0,b)|$, respectively, under $f_p^*$.
		
		For the edges of the form $v_0^jv_i^j$, we have
		$$F_{f(v_0^j)}+F(f(v_i^j))=F_{(j-1)\pi_p(0,b)}+F_{j+(i-1)\pi_p(0,b)}\equiv F_j\mo{p}$$
		for $i,j=1,2,\ldots,\pi_p(0,b)-1$. Now, by evaluating the Legendre symbol $(F_j/p)$ for $j=1,2,\ldots,\pi_p(0,b)-1$ and noting that $0\in \Lambda^p_{0}(0,b)$, it is clear that the number of edges of the form $v_0^jv_i^j$ with label $0$ and $1$ are $(\pi_p(0,b)-1)[|\Lambda^p_{-1}(0,b)|+|\Lambda^p_0(0,b)|-1]$ and $(\pi_p(0,b)-1)|\Lambda_1^p(0,b)|$, respectively, under $f_p^*$. 
		
		As a consequence,
		\begin{align*}
			e_{f_p^*}(0)&=(k+1)(m+\pi_p(0,b)-1)+\varepsilon+m[|\Lambda^p_{-1}(0,b)|+|\Lambda^p_0(0,b)|-1]\\
			&\hspace{0.5cm}+(\pi_p(0,b)-1)[|\Lambda^p_{-1}(0,b)|+|\Lambda^p_0(0,b)|-1]\\
			&=(k+1)(m+\pi_p(0,b)-1)+\varepsilon\\
			&\hspace{0.5cm}+(m+\pi_p(0,b)-1)[|\Lambda^p_{-1}(0,b)|+|\Lambda^p_0(0,b)|-1],\text{ and }\\
			e_{f_p^*}(1)&=m|\Lambda^p_1(0,b)|+(m+\pi_p(0,b)-1)|\Lambda^p_1(0,b)|\\
			&=(m+\pi_p(0,b)-1)|\Lambda^p_1(0,b)|.
		\end{align*}
		Since $p$ is a $k$-PL prime relative to $(0,b)$, that is, $k=|\Lambda^p_1(0,b)|-|\Lambda^p_{-1}(0,b)|-|\Lambda^p_0(0,b)|$, we have
		\begin{align*}
			e_{f_p^*}(0)-e_{f_p^*}(1)&=(k+1)(m+\pi_p(0,b)-1)+\varepsilon\\
			&\hspace{0.5cm}+(m+\pi_p(0,b)-1)[|\Lambda^p_{-1}(0,b)|+|\Lambda^p_0(0,b)|-1-|\Lambda^p_0(0,b)|]\\
			&=(k+1)(m+\pi_p(0,b)-1)+\varepsilon+(m+\pi_p(0,b)-1)(-k-1)\\
			&=\varepsilon.
		\end{align*}
		Therefore, $|e_{f_p^*}(0)-e_{f_p^*}(1)|\leq 1$ because $\varepsilon\in \{-1,0,1\}$. Consequently, $G\circ H$ is a $(0,b)$-FLC graph modulo $p$.
	\end{proof}
	
	\section{$k$-Pisano-Legendre Primes}\label{sec4}
	\hspace{1cm}In this section, all terms and notations of Definition~\ref{PL} will be used. In addition, let $p$ denotes an odd prime.
	
	\begin{proposition}\normalfont\label{pro1}
		If $p$ is a $k$-PL prime relative to $(a,b)$, then $|\Lambda_1^p(a,b)|=\frac{\pi_p(a,b)+k}{2}$.
	\end{proposition}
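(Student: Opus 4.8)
The plan is to exploit the fact that the three sets $\Lambda_{-1}^p(a,b)$, $\Lambda_0^p(a,b)$, and $\Lambda_1^p(a,b)$ form a partition of the index set $S=\{0,1,\ldots,\pi_p(a,b)-1\}$. This is immediate from the definition of the Legendre symbol, since for each $\xi\in S$ the value $(F_\xi/p)$ equals exactly one of $-1$, $0$, or $1$, placing $\xi$ in precisely one of the three sets. Consequently, I would first record the counting identity
$$|\Lambda_{-1}^p(a,b)|+|\Lambda_0^p(a,b)|+|\Lambda_1^p(a,b)|=|S|=\pi_p(a,b).$$

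Next I would invoke the hypothesis that $p$ is a $k$-PL prime relative to $(a,b)$, which by Definition~\ref{PL} means exactly
$$k=|\Lambda_1^p(a,b)|-|\Lambda_{-1}^p(a,b)|-|\Lambda_0^p(a,b)|.$$
The key algebraic step is then to add this defining equation to the partition identity above. The terms $|\Lambda_{-1}^p(a,b)|$ and $|\Lambda_0^p(a,b)|$ cancel, leaving $\pi_p(a,b)+k=2|\Lambda_1^p(a,b)|$, from which the claimed formula $|\Lambda_1^p(a,b)|=\tfrac{\pi_p(a,b)+k}{2}$ follows by dividing by $2$.

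There is essentially no hard part here: the entire argument rests on the observation that the Legendre symbol trichotomy partitions $S$, so that the total count and the signed count together determine $|\Lambda_1^p(a,b)|$ by elementary linear algebra in two unknowns. The only point worth stating explicitly for rigor is the disjointness and exhaustiveness of the three $\Lambda$-sets, which guarantees that their cardinalities sum to $\pi_p(a,b)$; everything after that is a one-line manipulation.
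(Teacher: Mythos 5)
Your proposal is correct and follows essentially the same argument as the paper: both establish the partition identity $|\Lambda_1^p(a,b)|+|\Lambda_0^p(a,b)|+|\Lambda_{-1}^p(a,b)|=\pi_p(a,b)$, combine it with the defining equation $k=|\Lambda_1^p(a,b)|-|\Lambda_{-1}^p(a,b)|-|\Lambda_0^p(a,b)|$, and add the two to obtain $2|\Lambda_1^p(a,b)|=\pi_p(a,b)+k$. Your version merely makes the disjointness and exhaustiveness of the three $\Lambda$-sets explicit, which the paper leaves implicit.
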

	
	\begin{proof}
		Note that
		\begin{align}
			|\Lambda_1^p(a,b)|+|\Lambda_{0}^p(a,b)|+|\Lambda_{-1}^p(a,b)|=\pi_p(a,b),\text{ and }\label{eq1}\\
			|\Lambda_1^p(a,b)|-|\Lambda_{-1}^p(a,b)|-|\Lambda_{0}^p(a,b)|=k.\label{eq2}
		\end{align}
		So, $2|\Lambda_1^p(a,b)|=\pi_p(a,b)+k$.
	\end{proof}
	
	\begin{corollary}\normalfont
		Suppose that $p$ is a $k$-PL prime relative to $(a,b)$. Then $k$ and $\pi_p(a,b)$ have the same parity. Thus, for all $k\in \chi (0,1)$, $k$ is even.
	\end{corollary}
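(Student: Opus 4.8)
The plan is to derive both assertions of the corollary directly from Proposition~\ref{pro1}, supplemented by one classical Fibonacci identity for the second assertion. First I would handle the parity claim, which is essentially immediate: Proposition~\ref{pro1} gives $|\Lambda_1^p(a,b)|=\frac{\pi_p(a,b)+k}{2}$, and since the left-hand side is the cardinality of a set it is a nonnegative integer. Hence $\frac{\pi_p(a,b)+k}{2}\in\mathbb{Z}$, so $\pi_p(a,b)+k$ is even, which is exactly the statement that $\pi_p(a,b)$ and $k$ have the same parity.

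For the consequence about $\chi(0,1)$, the parity statement reduces the problem to showing that the classical Pisano period $\pi_p(0,1)$ is even for every odd prime $p$. I would set $\pi=\pi_p(0,1)$ and read off from the definition of the Pisano period that $F_\pi\equiv F_0=0\mo{p}$ and $F_{\pi+1}\equiv F_1=1\mo{p}$; the recurrence then yields $F_{\pi-1}=F_{\pi+1}-F_\pi\equiv 1\mo{p}$. Feeding these boundary values into Cassini's identity $F_{\pi-1}F_{\pi+1}-F_\pi^2=(-1)^\pi$ and reducing modulo $p$ gives $(-1)^\pi\equiv 1\mo{p}$. Because $p$ is odd we have $-1\not\equiv 1\mo{p}$, so $\pi$ must be even; combining with the first part shows that every $k\in\chi(0,1)$ is even.

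The routine part is the first paragraph, which follows instantly from Proposition~\ref{pro1}. The hard part will be the evenness of $\pi_p(0,1)$, and the key idea there is to force the sign $(-1)^\pi$ to equal $1$ by plugging the specific boundary values $F_\pi\equiv 0$ and $F_{\pi\pm 1}\equiv 1$ into Cassini's identity, after which the oddness of $p$ rules out $\pi$ odd. Equivalently, one could run the argument through the Fibonacci $Q$-matrix $\left(\begin{smallmatrix}1&1\\1&0\end{smallmatrix}\right)$ in $GL_2(\mathbb{Z}/p\mathbb{Z})$, whose determinant $-1$ satisfies $(-1)^\pi\equiv\det(Q^\pi)\equiv 1\mo{p}$, again forcing $\pi$ even; I expect the Cassini route to be the cleaner presentation.
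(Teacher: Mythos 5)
Your proof is correct, and the first half coincides with the paper's: both of you read the parity claim directly off Proposition~\ref{pro1}, since $|\Lambda_1^p(a,b)|=\frac{\pi_p(a,b)+k}{2}$ must be an integer. The difference lies in the second half. The paper disposes of the evenness of $\pi_p(0,1)$ by citing an external result of Trojovsky (\cite{Trojovsky}), namely that $\pi_m(0,1)$ is even for \emph{all} moduli $m\geq 3$, whereas you prove exactly the case needed --- odd prime modulus --- from scratch via Cassini's identity: from $F_\pi\equiv 0$, $F_{\pi\pm 1}\equiv 1\mo{p}$ you get $(-1)^\pi\equiv F_{\pi-1}F_{\pi+1}-F_\pi^2\equiv 1\mo{p}$, and oddness of $p$ forces $\pi$ even. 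Your route buys self-containedness (no reliance on a cited theorem, and the $Q$-matrix determinant argument you mention is the same computation in disguise); the paper's citation buys generality, since Trojovsky's statement covers composite moduli as well, though that extra generality is never used in the corollary. One tiny point of care in your write-up: Cassini's identity at $n=\pi$ needs $\pi\geq 1$ so that $F_{\pi-1}$ is defined, which is automatic since the Pisano period of an odd prime is a positive integer (indeed at least $3$), so there is no actual gap.
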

	
	\begin{proof}
		By Proposition~\ref{pro1}, it is clear that $\pi_p(a,b)$ and $k$ have the same parity. In the classical Fibonacci sequence, it was proved in \cite{Trojovsky} that $\pi_m(0,1)$ is even for all $m\geq 3$. Hence, for any $k\in \chi (0,1)$, $k$ is even.
	\end{proof}
	
	\begin{corollary}\normalfont\label{cor1}
		An odd prime $p$ is a $k$-PL prime relative to $(a,b)$ if and only if $$|\Lambda_{-1}^p(a,b)|=\frac{\pi_p(a,b)-k-2|\Lambda_0^p(a,b)|}{2}\text{ or }|\Lambda_{0}^p(a,b)|=\frac{\pi_p(a,b)-k-2|\Lambda_{-1}^p(a,b)|}{2}.$$
	\end{corollary}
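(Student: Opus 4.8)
The plan is to reduce the stated equivalences to the two defining identities already recorded in the proof of Proposition~\ref{pro1}. Writing $A=|\Lambda_1^p(a,b)|$, $B=|\Lambda_0^p(a,b)|$, and $C=|\Lambda_{-1}^p(a,b)|$ for brevity, I would first stress that the partition identity $A+B+C=\pi_p(a,b)$ holds \emph{unconditionally}: it merely expresses that $S=\{0,1,\ldots,\pi_p(a,b)-1\}$ is split according to the three possible values of $(F_\xi/p)$. By Definition~\ref{PL}, the assertion that $p$ is a $k$-PL prime relative to $(a,b)$ is \emph{exactly} the second identity $A-B-C=k$. Thus all the content of the corollary lies in manipulating these two linear relations.

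For the forward implication I would assume $A-B-C=k$ and use the partition identity to eliminate $A$, substituting $A=\pi_p(a,b)-B-C$. This yields $\pi_p(a,b)-2B-2C=k$, from which solving for $C$ gives the first formula $C=\tfrac{\pi_p(a,b)-k-2B}{2}$ and solving for $B$ gives the second formula $B=\tfrac{\pi_p(a,b)-k-2C}{2}$. Since $p$ being a $k$-PL prime produces \emph{both} formulas, it certainly produces their disjunction.

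For the converse I would show that \emph{each} of the two formulas, taken together with the unconditional partition identity, reconstructs the relation $A-B-C=k$. Starting from $C=\tfrac{\pi_p(a,b)-k-2B}{2}$, I would clear the denominator to get $2C+2B=\pi_p(a,b)-k$ and then replace $\pi_p(a,b)$ by $A+B+C$ to obtain $A-B-C=k$; the same substitution applied to the second formula gives the identical conclusion. Hence either disjunct alone already forces $p$ to be a $k$-PL prime relative to $(a,b)$.

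I do not anticipate a genuine obstacle, as the argument is pure linear algebra in three unknowns. The only point requiring care is the logical one: because each disjunct is individually equivalent to the defining relation $A-B-C=k$, the two formulas are in fact equivalent to one another, so the ``or'' in the statement could be strengthened to ``and''. I would flag this observation while retaining the weaker (and still correct) disjunctive phrasing to match the stated form.
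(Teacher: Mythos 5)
Your proposal is correct and follows essentially the same route as the paper, which simply cites the two linear identities (\ref{eq1}) and (\ref{eq2}) from Proposition~\ref{pro1} and leaves the elementary algebra implicit; you have merely written out that algebra in both directions. Your added observation that each disjunct is individually equivalent to the defining relation, so the ``or'' could be strengthened to ``and'', is accurate and consistent with the paper's intent.
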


	\begin{proof}
		Immediately follows Proposition~\ref{pro1} and equations~(\ref{eq1}) and (\ref{eq2}).
	\end{proof}

	\begin{proposition}\normalfont\label{pro2}
		Let $z(p)$ define the order of apparition of $p$. Hence, $|\Lambda_0^p(0,1)|=\frac{\pi_p(0,1)}{z(p)}$. In fact, $|\Lambda_0^p(0,1)|\in \{1,2,4\}$. Moreover, $|\Lambda_0^p(0,1)|=4$ if and only if $z(p)$ is odd.
	\end{proposition}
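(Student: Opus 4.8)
The plan is to reduce the whole statement to computing the multiplicative order, modulo $p$, of a single scalar arising from the Fibonacci $Q$-matrix. First I would record the classical divisibility characterization of the order of apparition: $p\mid F_n$ if and only if $z(p)\mid n$. Since $(F_\xi/p)=0$ is equivalent to $p\mid F_\xi$, the set $\Lambda_0^p(0,1)$ consists exactly of those $\xi\in\{0,1,\ldots,\pi_p(0,1)-1\}$ that are multiples of $z(p)$, namely $0,z(p),2z(p),\ldots$. Using that $z(p)\mid\pi_p(0,1)$ (a classical fact, and also a byproduct of the factorization established below), these multiples number exactly $\pi_p(0,1)/z(p)$, which proves the first assertion $|\Lambda_0^p(0,1)|=\pi_p(0,1)/z(p)$.

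For the refined claim I would introduce the $Q$-matrix $Q=\left(\begin{smallmatrix}1&1\\1&0\end{smallmatrix}\right)$, whose powers satisfy $Q^n=\left(\begin{smallmatrix}F_{n+1}&F_n\\F_n&F_{n-1}\end{smallmatrix}\right)$, so that $\pi_p(0,1)$ is precisely the multiplicative order of $Q$ in $\mathrm{GL}_2(\mathbb{F}_p)$. Evaluating at $n=z(p)$ and using $F_{z(p)}\equiv 0$ together with $F_{z(p)-1}\equiv F_{z(p)+1}\pmod p$ (immediate from the recurrence), I obtain $Q^{z(p)}\equiv sI\pmod p$, where $s:=F_{z(p)+1}\bmod p$ and $I$ is the identity. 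Because the off-diagonal entry of $Q^r$ is $F_r$, which vanishes modulo $p$ only when $z(p)\mid r$, the exponent $z(p)$ is the least one making $Q$ a scalar matrix; hence $Q^N\equiv I$ forces $z(p)\mid N$, and writing $N=z(p)t$ gives $Q^N\equiv s^tI$. Therefore $\pi_p(0,1)=z(p)\cdot\operatorname{ord}_p(s)$, so that $|\Lambda_0^p(0,1)|=\operatorname{ord}_p(s)$.

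It then remains to pin down $\operatorname{ord}_p(s)$. Taking determinants in $Q^{z(p)}\equiv sI$ and using $\det Q=-1$ yields $s^2\equiv(-1)^{z(p)}\pmod p$, and I would split into cases. If $z(p)$ is even, then $s^2\equiv 1$, so $s\equiv\pm1$ and $\operatorname{ord}_p(s)\in\{1,2\}$ (distinct because $p$ is odd). If $z(p)$ is odd, then $s^2\equiv-1\not\equiv 1$, so $s$ has order dividing $4$ but not $2$, forcing $\operatorname{ord}_p(s)=4$. Combining the cases, $|\Lambda_0^p(0,1)|=\operatorname{ord}_p(s)\in\{1,2,4\}$, with the value $4$ occurring precisely when $z(p)$ is odd, which is the asserted equivalence.

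The main obstacle I anticipate is the clean justification that $z(p)$ is the smallest exponent rendering $Q$ scalar modulo $p$, and hence the identity $\pi_p(0,1)=z(p)\cdot\operatorname{ord}_p(s)$; once this is in hand the rest is a determinant computation and elementary case analysis. I would also be careful to use that $p$ is odd, so $1\not\equiv-1$ guarantees $\operatorname{ord}_p(s)=2$ (not $1$) in the subcase $s\equiv-1$, and that $s$ is a unit modulo $p$, which follows from $s^2\equiv\pm1$.
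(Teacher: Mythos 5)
Your proof is correct, and for the substantive part of the proposition it takes a genuinely different route from the paper. The first claim, $|\Lambda_0^p(0,1)|=\pi_p(0,1)/z(p)$, you establish exactly as the paper does: the indices $\xi$ with $(F_\xi/p)=0$ are precisely the multiples of $z(p)$ lying in $\{0,1,\ldots,\pi_p(0,1)-1\}$. For the remaining claims, however, the paper gives no argument of its own: it simply cites Lemma 3 of its Trojovsky reference for $|\Lambda_0^p(0,1)|\in\{1,2,4\}$ and for the equivalence with $z(p)$ odd. You instead prove these from scratch via the $Q$-matrix: $\pi_p(0,1)$ is the order of $Q$ in $\mathrm{GL}_2(\mathbb{F}_p)$, and $Q^{z(p)}\equiv sI\pmod{p}$ with $s=F_{z(p)+1}$, which yields the factorization $\pi_p(0,1)=z(p)\cdot\mathrm{ord}_p(s)$ and hence $|\Lambda_0^p(0,1)|=\mathrm{ord}_p(s)$; the determinant identity $s^2\equiv(-1)^{z(p)}\pmod{p}$ then pins down $\mathrm{ord}_p(s)\in\{1,2,4\}$, with the value $4$ occurring exactly when $z(p)$ is odd. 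This is in effect a self-contained proof of the classical result (essentially Vinson's theorem) that the paper outsources; the paper's approach buys brevity, while yours makes the proposition independent of the citation. The obstacle you flag is not really one: $Q^r$ is scalar iff $F_r\equiv 0\pmod{p}$, so the minimality in the definition of $z(p)$ already makes it the least scalar exponent, and the implication $Q^N\equiv I\Rightarrow z(p)\mid N$ follows by writing $N=qz(p)+r'$ with $0\le r'<z(p)$, noting that $Q^{r'}\equiv s^{-q}Q^N$ is then scalar (legitimate since $s^2\equiv\pm1$ makes $s$ a unit), whence $F_{r'}\equiv 0$ and $r'=0$; this also recovers, as you anticipated, the divisibility characterization $p\mid F_n\iff z(p)\mid n$ without circularity.
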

	
	\begin{proof}
		In classical Fibonacci sequence, the order of apparition of an odd prime $p$, denoted by $z(p)$ (defined in \cite{Trojovsky}), is the smallest positive integer $n$ such that $F_n\equiv 0\mo{p}$. So, for any positive integer $m$, $F_{mz(p)}\equiv 0\mo{p}$. It is easy to show that $z(p)|\pi_p(0,1)$ because $F_{\pi_p(0,1)}\equiv 0\equiv F_{z(p)}\mo{p}$. This means that all $(0,1)$-Fibonacci numbers divisible by $p$ in one $(0,1)$-Pisano period occurs exactly at multiples of $z(p)$. Therefore, 
		$|\Lambda_0^p(0,1)|=\frac{\pi_p(0,1)}{z(p)}$.
		Using Lemma 3 in \cite{Trojovsky}, we have $|\Lambda_0^p(0,1)|\in \{1,2,4\}$ and it follows that $|\Lambda_0^p(0,1)|=4$ if and only if $z(p)$ is odd.
	\end{proof}
	
	Combining Corollary~\ref{cor1} and Proposition~\ref{pro2}, we have the following results:
	\begin{corollary}\normalfont\label{cor2}
		If $z(p)$ is odd, then $p$ is a $k$-PL prime relative to $(0,1)$ if and only if 
		$$|\Lambda_{-1}^p(0,1)|=\frac{\pi_p(0,1)-k}{2}-4.$$
	\end{corollary}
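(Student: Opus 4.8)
The plan is to obtain this as an immediate specialization of the two preceding results, with no new machinery required. First I would invoke the standing hypothesis that $z(p)$ is odd and feed it into Proposition~\ref{pro2}, whose final clause asserts that $|\Lambda_0^p(0,1)| = 4$ holds exactly when $z(p)$ is odd. This pins the middle count at $|\Lambda_0^p(0,1)| = 4$.

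Next I would apply Corollary~\ref{cor1} in the case $(a,b) = (0,1)$. That corollary states, as a biconditional, that $p$ is a $k$-PL prime relative to $(0,1)$ if and only if $|\Lambda_{-1}^p(0,1)| = \tfrac{\pi_p(0,1) - k - 2|\Lambda_0^p(0,1)|}{2}$. Substituting the value $|\Lambda_0^p(0,1)| = 4$ found in the previous step into the right-hand side and simplifying, $\tfrac{\pi_p(0,1) - k - 8}{2} = \tfrac{\pi_p(0,1) - k}{2} - 4$, which is precisely the asserted equation. Since Corollary~\ref{cor1} is already an ``if and only if,'' both directions of the claimed equivalence follow at once, and no separate verification of each implication is needed.

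I expect no real obstacle here: the entire content is carried by Proposition~\ref{pro2} (which supplies $|\Lambda_0^p(0,1)| = 4$) and Corollary~\ref{cor1} (which supplies the characterization), and the remaining work is a one-line algebraic substitution. The only point worth keeping in mind is that the hypothesis ``$z(p)$ odd'' enters solely through Proposition~\ref{pro2} to fix the constant at $4$; were $z(p)$ even instead, the identical argument would replace $4$ by $|\Lambda_0^p(0,1)| \in \{1,2\}$, yielding a different additive constant and hence a different (but analogous) characterization.
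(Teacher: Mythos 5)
Your proposal is correct and is exactly the paper's argument: the paper presents Corollary~\ref{cor2} as an immediate consequence of ``combining Corollary~\ref{cor1} and Proposition~\ref{pro2},'' which is precisely your substitution of $|\Lambda_0^p(0,1)|=4$ (from the odd-$z(p)$ clause of Proposition~\ref{pro2}) into the characterization of Corollary~\ref{cor1}. Your use of only the first disjunct of Corollary~\ref{cor1} is harmless, since the two displayed equations there are algebraically equivalent rearrangements of the same identity.
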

	
	\begin{corollary}\normalfont\label{cor3}
		An odd prime $p$ is a $k$-PL prime relative to $(0,1)$ if and only if 
		$$|\Lambda_{-1}^p(0,1)|=\frac{\pi_p(0,1)-k-2\epsilon}{2}$$
		for some $\epsilon\in \{1,2,4\}$.
	\end{corollary}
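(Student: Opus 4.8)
The plan is to specialize Corollary~\ref{cor1} to the classical initial values $(a,b)=(0,1)$ and then feed in the arithmetic constraint on $|\Lambda_0^p(0,1)|$ supplied by Proposition~\ref{pro2}. Concretely, Corollary~\ref{cor1} already records that $p$ is a $k$-PL prime relative to $(0,1)$ if and only if
$$|\Lambda_{-1}^p(0,1)|=\frac{\pi_p(0,1)-k-2|\Lambda_0^p(0,1)|}{2},$$
so that the remaining content of the present statement is simply to identify which numerical values the quantity $|\Lambda_0^p(0,1)|$ is permitted to take, and to encode that information in the free symbol $\epsilon$.

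First I would invoke Proposition~\ref{pro2}, which asserts $|\Lambda_0^p(0,1)|\in\{1,2,4\}$. Writing $\epsilon:=|\Lambda_0^p(0,1)|$, the forward implication is then immediate: if $p$ is a $k$-PL prime relative to $(0,1)$, then $\epsilon\in\{1,2,4\}$ and, by the displayed identity inherited from Corollary~\ref{cor1}, the equation $|\Lambda_{-1}^p(0,1)|=\tfrac{\pi_p(0,1)-k-2\epsilon}{2}$ holds for this particular $\epsilon$, furnishing the ``only if'' half with an explicit witness drawn from $\{1,2,4\}$.

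For the converse I would observe that, for a fixed prime $p$ and a fixed integer $k$, the equation $|\Lambda_{-1}^p(0,1)|=\tfrac{\pi_p(0,1)-k-2\epsilon}{2}$ determines $\epsilon$ uniquely, namely $\epsilon=\tfrac{\pi_p(0,1)-k-2|\Lambda_{-1}^p(0,1)|}{2}$. Comparing this against equations~(\ref{eq1}) and (\ref{eq2}), which give the genuine value $|\Lambda_0^p(0,1)|=\tfrac{\pi_p(0,1)-k_0-2|\Lambda_{-1}^p(0,1)|}{2}$ for the unique index $k_0$ making $p$ a $k_0$-PL prime, forces the witnessing $\epsilon$ to coincide with $|\Lambda_0^p(0,1)|$ and hence $k=k_0$. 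I expect this reconciliation --- matching the existentially quantified $\epsilon\in\{1,2,4\}$ against the single true value $|\Lambda_0^p(0,1)|$ so that the characterization pins down exactly the correct index --- to be the main obstacle, since a careless reading would permit the equation to hold for a spurious $\epsilon$ and therefore for an index other than $k_0$; the argument must therefore exploit the uniqueness of $\epsilon$ for each fixed pair $(p,k)$ to close this gap.
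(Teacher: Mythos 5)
Your forward direction is exactly the paper's argument: the paper gives no separate proof of Corollary~\ref{cor3}, stating only that it follows by combining Corollary~\ref{cor1} with Proposition~\ref{pro2}, i.e.\ by taking the witness $\epsilon=|\Lambda_0^p(0,1)|\in\{1,2,4\}$.

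The gap is in your converse. The step where you claim the equation ``forces the witnessing $\epsilon$ to coincide with $|\Lambda_0^p(0,1)|$ and hence $k=k_0$'' is circular: from $\epsilon=\frac{\pi_p(0,1)-k-2|\Lambda_{-1}^p(0,1)|}{2}$ and $|\Lambda_0^p(0,1)|=\frac{\pi_p(0,1)-k_0-2|\Lambda_{-1}^p(0,1)|}{2}$ (the latter from equations~(\ref{eq1}) and (\ref{eq2})) you obtain only $k-k_0=2\left(|\Lambda_0^p(0,1)|-\epsilon\right)$, so $\epsilon=|\Lambda_0^p(0,1)|$ holds if and only if $k=k_0$ --- which is precisely what you set out to prove; nothing rules out a spurious witness. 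In fact the subtlety you flagged is fatal to the literal statement: take $p=3$, for which the paper computes $\pi_3(0,1)=8$, $|\Lambda_{-1}^3(0,1)|=3$, $|\Lambda_0^3(0,1)|=2$, and $k_0=-2$. Choosing $k=0$ and $\epsilon=1$ gives $\frac{\pi_3(0,1)-k-2\epsilon}{2}=\frac{8-0-2}{2}=3=|\Lambda_{-1}^3(0,1)|$, yet $3$ is a $(-2)$-PL prime, not a $0$-PL prime, relative to $(0,1)$. So the ``if'' direction with a freely quantified $\epsilon\in\{1,2,4\}$ is false, and no argument can close the gap you identified. The corollary is correct only under the reading the paper implicitly intends (and which your forward direction already uses): $\epsilon$ is not an arbitrary element of $\{1,2,4\}$ but is the specific quantity $|\Lambda_0^p(0,1)|$, which Proposition~\ref{pro2} confines to $\{1,2,4\}$; under that reading the statement is an immediate specialization of Corollary~\ref{cor1} and the converse issue disappears.
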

	
	\begin{table}[h]\centering
		\begin{tabular}{|c|c||c|c||c|c||c|c|}
			\hline
			$k$&$\zeta_{(0,1)}(k)$&$k$&$\zeta_{(0,1)}(k)$&$k$&$\zeta_{(0,1)}(k)$&$k$&$\zeta_{(0,1)}(k)$\\\hline
			$-42$ &$13591$&$-20$ &$97$&$0$  &$11$  &$20$ &$89$   \\\hline
			$-40$ &$5591$&$-18$ &$1289$&$2$  &$809$ &$22$ &$1231$ \\\hline
			$-38$ &$5689$&$-16$ &$1151$&$4$  &$113$ &$24$ &$1409$ \\\hline
			$-36$ &$1153$&$-14$ &$1009$&$6$  &$199$ &$28$ &$73$   \\\hline
			$-34$ &$14821$&$-12$ &$619$&$8$  &$41$  &$32$ &$1871$ \\\hline
			$-32$ &$241$&$-10$ &$461$&$10$ &$331$ &$34$ &$5741$ \\\hline
			$-28$ &$569$&$-8$ &$709$&$12$ &$17$  &$36$ &$3499$ \\\hline
			$-26$ &$1471$&$-6$ &$2251$&$14$ &$541$ &$38$ &$3391$ \\\hline
			$-24$ &$1031$&$-4$ &$5$&$16$ &$1999$&$40$ &$3919$ \\\hline
			$-22$ &$10771$&$-2$ &$7$&$18$ &$811$ &$42$ &$14969$\\\hline
		\end{tabular}
		\caption{Values of $\zeta_{(0,1)}(k)$ for some $k$}\label{tab2}
	\end{table}

	Moreover, an interesting pattern emerge from the function $\zeta_{(a,b)}:\chi(a,b)\to \mathbb{P}$, where $\mathbb{P}$ is the set of odd primes, defined by $$\zeta_{(a,b)}(k)=\min\{p:\text{$p$ is a $k$-PL prime relative to $(a,b)$}\},$$
	for integers $a$ and $b$. Table~\ref{tab2} exhibits some values of $\zeta_{(0,1)}(k)$ for some integer $k$.
	
	For the classical Fibonacci sequence case, Figure~\ref{Fig1} shows the plot of
	$\zeta_{(0,1)}(k)$ for some values of $k$. The behaviour of
	$\zeta_{(0,1)}(k)$ appears to be bounded below by some quadratic functions of
	the form $g(k)=ck^2$, where $c$ is a real number. In fact, Figure~\ref{Fig2}
	shows the reference curve $g(k)=0.055k^2$, which exhibits that
	$\zeta_{(0,1)}(k)>0.055k^2$ for $|k|<600$ and
	$3<\zeta_{(0,1)}(k)<25000$.
	
	This behaviour is not limited to $(a,b)=(0,1)$. Figure~\ref{Fig3}, for the
	classical Lucas sequence case, shows that the values of
	$\zeta_{(2,1)}(k)$ are above the given reference curve
	$g(k)=0.061k^2$. Furthermore, similar behaviour is observed for the cases
	$(a,b)=(-2,7)$ and $(a,b)=(-2,-4)$, as shown in Figures~\ref{Fig4}
	(with reference curve $g(k)=0.042k^2$) and \ref{Fig5}
	(with reference curve $g(k)=0.055k^2$).
	
	These numerical evidences lead to the following
	conjecture:
	
	\begin{conjecture}\normalfont
		For any integers $a$ and $b$ with $(a,b)\neq(0,0)$, there exists a real number
		$c$ with $0<c<1$ such that $\zeta_{(a,b)}(k)>ck^2$ for all
		$k\in \chi(a,b)$.
	\end{conjecture}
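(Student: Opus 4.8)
The plan is to reduce the conjecture to square‑root cancellation in a Legendre‑symbol character sum taken over a single Pisano period. By Proposition~\ref{pro1} together with equations~(\ref{eq1})--(\ref{eq2}), if $p$ is a $k$‑PL prime relative to $(a,b)$ then, writing $\chi_p(\cdot)=(\cdot/p)$ for the Legendre symbol,
\[
k=2|\Lambda_1^p(a,b)|-\pi_p(a,b)=\sum_{\xi=0}^{\pi_p(a,b)-1}\left(\frac{F_\xi}{p}\right)-|\Lambda_0^p(a,b)|,
\]
since $\sum_\xi(F_\xi/p)=|\Lambda_1^p(a,b)|-|\Lambda_{-1}^p(a,b)|$. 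As $\zeta_{(a,b)}(k)$ is the smallest such $p$, it suffices to prove that \emph{every} $k$‑PL prime $p$ satisfies $p>ck^2$, i.e. that $|k|\le C\sqrt p$ for some $C=C(a,b)$, whence $p\ge k^2/C^2$. The trivial estimate $|k|\le\pi_p(a,b)$ yields only $p\gtrsim|k|$ (given the period bound below), which is far short of quadratic; the entire content is therefore a square‑root bound on $S:=\sum_{\xi=0}^{\pi_p(a,b)-1}(F_\xi/p)$, plus a uniform bound on $|\Lambda_0^p(a,b)|$.

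Two structural facts drive the estimate. First, the period is linear in $p$: the companion matrix $\bigl(\begin{smallmatrix}1&1\\1&0\end{smallmatrix}\bigr)$ has eigenvalues $\alpha,\beta$ with $\alpha\beta=-1$, so in the \emph{split} case ($5$ a quadratic residue mod $p$) one has $\alpha,\beta\in\mathbb{F}_p^\times$ and $\pi_p(a,b)\mid p-1$, while in the \emph{inert} case $\beta=\alpha^p$ and $\alpha^{p+1}=-1$, whence $\pi_p(a,b)\mid 2(p+1)$; in either case $\pi_p(a,b)=O(p)$. Second, $F_\xi=A\alpha^\xi+B\beta^\xi$ with $A,B$ fixed by $(a,b)$, so $S$ is the quadratic character summed over the cyclic orbit $\{A\alpha^\xi+B\beta^\xi\}$.

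I would then handle the two cases by completion and Weil's bound. In the split case everything lies in $\mathbb{F}_p$; writing the orbit inside a subgroup $H\le\mathbb{F}_p^\times$ of order $d=\pi_p(a,b)$, expanding $\mathbf 1_H=\tfrac1e\sum_{\psi\in H^\perp}\psi$ with $e=(p-1)/d$, and absorbing the sign $(-1)^\xi$ coming from $\alpha\beta=-1$, each completed sum $\sum_{u\in\mathbb{F}_p^\times}\psi(u)\chi_p(g(u))$—with $g$ the fixed rational function expressing $F_\xi$ through $u=\alpha^\xi$—is $O(\sqrt p)$ by Weil, and the $1/e$ normalization exactly cancels the $e$ characters, giving $|S|=O(\sqrt p)$. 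The inert case is the genuine difficulty: here $H$ sits inside the norm‑one torus $T=\{x\in\mathbb{F}_{p^2}^\times:x^{p+1}=1\}$ of order $p+1$, a subgroup of size $\Theta(\sqrt{|\mathbb{F}_{p^2}^\times|})$, precisely the regime where completion over $\mathbb{F}_{p^2}^\times$ returns only the trivial bound. The plan is instead to use that $T$ is one‑dimensional and to recognize $S$ as a Kloosterman/Salié‑type sum $\sum_{x\in T}\chi_p(\mathrm{Tr}(cx))$ with $c\in\mathbb{F}_{p^2}$ depending on $(a,b)$, for which Weil's estimate for curves (as with the classical Kloosterman bound $2\sqrt p$) again yields $O(\sqrt p)$; since $H$ has index $O(1)$ in $T$, completing within $T$ costs only a bounded factor.

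Combining the cases gives $|k|\le|S|+|\Lambda_0^p(a,b)|\le C\sqrt p$ once $|\Lambda_0^p(a,b)|$ is bounded (for $(0,1)$ this is Proposition~\ref{pro2}; in general the number of zeros per period is $\pi_p(a,b)/z(p)=O(1)$), so $p\ge k^2/C^2$ for all but finitely many $k\in\chi(a,b)$. Shrinking the constant to absorb those finitely many small $k$ (each with $\zeta_{(a,b)}(k)\ge 3$) then yields a single $c$ with $0<c<1$ and $\zeta_{(a,b)}(k)>ck^2$ throughout. The main obstacle is squarely the inert case: making the passage to a genuinely one‑dimensional Kloosterman‑type sum rigorous and uniform in $(a,b)$—in particular handling degenerate or short orbits (when $A=0$, $B=0$, or the initial vector lies in a proper invariant coset), the exact relation between $\pi_p(a,b)$ and $\mathrm{ord}(\alpha)$ with its stray factors of $2$, and the ramified prime $p=5$—while keeping $C(a,b)$ explicit enough to guarantee $c<1$.
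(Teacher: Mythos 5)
First, a framing point: this statement appears in the paper as a \emph{conjecture}, supported only by the numerical evidence of Figures~\ref{Fig1}--\ref{Fig5}; the paper contains no proof of it. So your proposal is an attempt to settle an open problem, not to reproduce an argument, and it must be judged as such. Your reduction is sound as far as it goes: by Proposition~\ref{pro1} and equations~(\ref{eq1})--(\ref{eq2}), $k=S-|\Lambda_0^p(a,b)|$ with $S=\sum_{\xi}(F_\xi/p)$, so a bound $|S|=O(\sqrt p)$ holding for every $k$-PL prime $p$ (together with $|\Lambda_0^p(a,b)|=O(1)$, which you assert but do not prove for general $(a,b)$; Proposition~\ref{pro2} covers only $(0,1)$) would give $\zeta_{(a,b)}(k)\geq k^2/C^2$ apart from the finitely many degenerate primes dividing $b^2-ab-a^2$, and shrinking $c$ absorbs those exceptions. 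The split case, via completion over the cyclic orbit subgroup of $\mathbb{F}_p^\times$ and Weil's bound for the mixed sums $\sum_u\psi(u)\chi_p(g(u))$, is standard in outline, modulo verifying nondegeneracy of each twist.

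The genuine gap is the inert case, and it is worse than the "main obstacle" you describe. Your key claim there --- that the orbit subgroup $H$ has index $O(1)$ in the norm-one torus $T$, so completion costs only a bounded factor --- is false in general: $\pi_p(a,b)$ can be a proper divisor of $2(p+1)$ with unbounded index. The trivial estimate $|k|\le\pi_p(a,b)$ rescues you only when $\pi_p(a,b)\lesssim\sqrt p$; in the intermediate regime $\sqrt p\ll\pi_p(a,b)\ll p$ you must complete with \emph{all} characters $\lambda$ of $T$ trivial on $H$, and you supply no bound for the twisted sums $\sum_{x\in T}\lambda(x)\chi_p(\mathrm{Tr}(cx))$. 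Moreover, these are not Kloosterman or Salié sums: those have an \emph{additive} character of the trace, whereas here $\chi_p$ is the quadratic multiplicative character of $\mathbb{F}_p$ composed with the trace. The untwisted sum can indeed be handled by rationally parametrizing the conic $T$ and applying Weil to $\sum_t\chi_p(R(t))$, but a twist by a character of the non-split torus does not factor through a rational function to $\mathbb{F}_p^\times$; one needs the Lang-torsor sheaf on $T$ tensored with the quadratic Kummer sheaf, Weil~II, and a geometric-nontriviality argument ruling out the cases where the sum has size $\pi_p(a,b)$ rather than $O(\sqrt p)$. That analysis --- precisely the place where $|k|$ could secretly grow linearly in $p$ and falsify the quadratic lower bound --- is absent, so what you have is a plausible research program consistent with the paper's numerics (square-root cancellation matches the observed constants $c\approx 0.04$--$0.06$), not a proof.
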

	
	In addition, Figures~\ref{Fig2} to \ref{Fig5} display many values of $k$ for
	each of the initial values $(a,b)=(0,1),(2,1),(-2,7),(-2,-4)$. This serves as
	evidence that there might be infinitely many values of $k$ with
	$k\in \chi(a,b)$ for fixed initial values $(a,b)$. Thus, this leads to the
	following general conjecture:
	
	\begin{conjecture}\normalfont\label{con1}
		For any integers $a$ and $b$ with $(a,b)\neq(0,0)$,
		$|\chi(a,b)|=\aleph_0$.
	\end{conjecture}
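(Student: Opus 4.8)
The plan is to separate the two parts of the claim. Since $\chi(a,b)\subseteq\mathbb{Z}$, it is automatically at most countable, so the real content of the statement is that $\chi(a,b)$ is \emph{infinite}; that is, it suffices to exhibit infinitely many integers $k$ realized as $|\Lambda_1^p(a,b)|-|\Lambda_{-1}^p(a,b)|-|\Lambda_0^p(a,b)|$ for some odd prime $p$. Writing $k(p)$ for this quantity and
$$
S_p=\sum_{n=0}^{\pi_p(a,b)-1}(F_n/p),
$$
we have $S_p=|\Lambda_1^p(a,b)|-|\Lambda_{-1}^p(a,b)|$, and hence $k(p)=S_p-|\Lambda_0^p(a,b)|$. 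The first reduction I would make is a lemma that $|\Lambda_0^p(a,b)|$ is bounded uniformly in $p$. Since every $(a,b)$-Fibonacci sequence obeys the same recurrence, with characteristic polynomial $x^2-x-1$, its terms have a Binet form $F_n=A\alpha^n+B\beta^n$ over $\mathbb{F}_p$ or $\mathbb{F}_{p^2}$, and the zeros in one period are the solutions of $(\alpha/\beta)^n=-B/A$, of which there are at most a fixed constant per period (generalizing Proposition~\ref{pro2}). Granting this, and noting that an unbounded integer sequence realizes infinitely many distinct values, the conjecture reduces to the single claim that $\{\,|k(p)|\,\}$, equivalently $\{\,|S_p|\,\}$, is unbounded as $p$ ranges over the odd primes.

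Next I would try to lower-bound $|S_p|$ along an explicit family of primes. Because the discriminant is always $5$, the natural dichotomy is the value of $(5/p)$: for $p\equiv\pm1\mo{5}$ the roots $\alpha,\beta$ lie in $\mathbb{F}_p^{\times}$, so $\alpha$ has order dividing $p-1$, while for $p\equiv\pm2\mo{5}$ they lie in $\mathbb{F}_{p^2}$ with $\beta=\alpha^p$ and $\alpha^{p+1}=-1$. In either case I would substitute the Binet form into $S_p$ and rewrite it as a character sum
$$
S_p=\sum_{n}\Big(\tfrac{A\alpha^n+B\beta^n}{p}\Big)
$$
over the cyclic group $\langle\alpha\rangle$, then attempt to extract a main term that grows with $\pi_p(a,b)$. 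Since the Pisano periods $\pi_p(a,b)$ are unbounded as $p\to\infty$, the number of $\pm1$ summands grows, and the goal would be to show that these terms do not cancel almost completely for infinitely many $p$.

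The hard part will be precisely this lower bound on the character sum. The standard machinery---Weil-type bounds for the relevant curve, together with Gauss and Jacobi sums---produces \emph{upper} bounds of size $O(\sqrt{p})$ and thus only establishes cancellation, which is the opposite of what is needed; ruling out near-total cancellation of $S_p$ for infinitely many primes is exactly the obstruction that keeps the statement conjectural. I would therefore also pursue a more constructive route: using Corollary~\ref{cor3} (and Proposition~\ref{pro2} in the classical case, where $|\Lambda_0^p(0,1)|\in\{1,2,4\}$), the question becomes one of controlling $|\Lambda_{-1}^p(0,1)|=\tfrac{\pi_p(0,1)-k-2\epsilon}{2}$, so I would look for residue classes of $p$ (modulo a fixed small modulus, or primes with $z(p)$ odd so that $|\Lambda_0^p(0,1)|=4$) in which the count of quadratic nonresidues among $F_0,\dots,F_{\pi_p(0,1)-1}$ admits a closed form that visibly departs from $\tfrac12\pi_p(0,1)$ as $p\to\infty$. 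Even there, controlling $|\Lambda_{-1}^p|$ across an infinite family reduces to the same residue-distribution question, so I expect the genuine difficulty to be analytic rather than combinatorial, and any complete proof to require an input---an effective lower bound for these Legendre-symbol sums, or an equidistribution statement with a controlled nonzero bias---that is currently out of reach.
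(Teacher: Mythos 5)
There is no paper proof to compare against here: the statement you were given is presented in the paper as a \emph{conjecture} (Conjecture~\ref{con1}), supported only by numerical evidence --- the tables of $\zeta_{(0,1)}(k)$ and the plots for $(a,b)=(0,1),(2,1),(-2,7),(-2,-4)$ showing that many values of $k$ occur --- and the paper offers no argument for infinitude. Your proposal correctly recognizes this: you do not claim a proof, and your concluding assessment that the statement is genuinely open, with the obstruction being an analytic lower bound rather than a combinatorial identity, is accurate. Your preliminary reductions are sound and in fact go beyond what the paper records: the observation that countability is automatic since $\chi(a,b)\subseteq\mathbb{Z}$; the identity $k(p)=S_p-|\Lambda_0^p(a,b)|$ with $S_p=\sum_{n=0}^{\pi_p(a,b)-1}(F_n/p)$; the uniform boundedness of $|\Lambda_0^p(a,b)|$ via the Binet form (the zeros in one period are solutions of $(\alpha/\beta)^n=-B/A$, hence at most $\pi_p(a,b)/\mathrm{ord}(\alpha/\beta)$ of them, which is bounded because $\alpha\beta=-1$ forces $\mathrm{ord}(\alpha/\beta)$ to be within a factor of $4$ of $\mathrm{ord}(\alpha)$); and the resulting equivalence of the conjecture with unboundedness of $|S_p|$ over odd primes. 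This sharpens the paper's Proposition~\ref{pro1} and Corollaries~\ref{cor1}--\ref{cor3}, which only shuffle the three cardinalities among themselves. The obstruction you identify is the real one: Weil-type estimates bound $|S_p|$ from above by $O(\sqrt{p})$ and say nothing about how often near-total cancellation fails, which is exactly what would be needed. So there is no gap in what you actually assert; just be clear, if you write this up, that it is a reformulation and a barrier analysis of an open conjecture, not a proof.
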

	
	Note that $\aleph_0$ is the symbol reserved for the cardinality of a countably infinite set.
	
	\begin{figure}[!hbt]\centering
		\includegraphics[height=3in,width=5in]{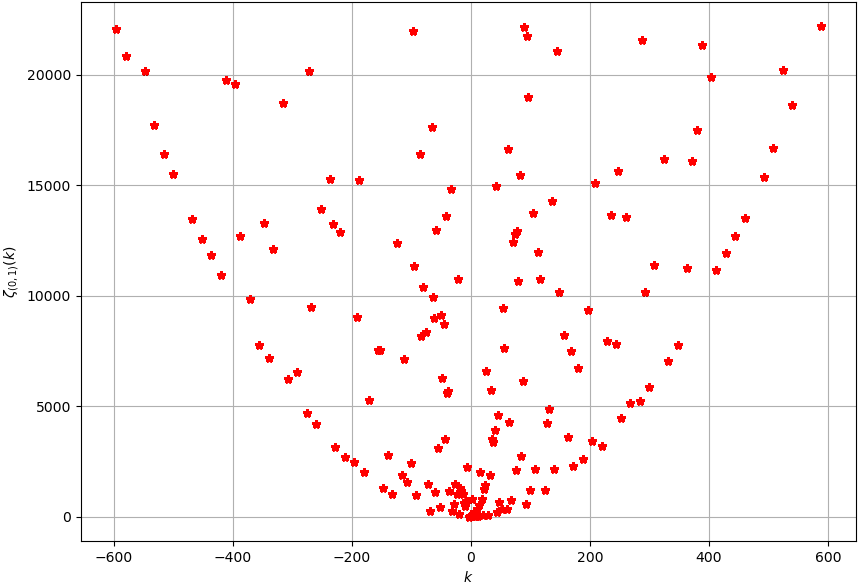}
		\caption{Plot of $\zeta_{(0,1)}(k)$}\label{Fig1}
	\end{figure}

	\begin{figure}[!hbt]\centering
		\includegraphics[height=3in,width=5in]{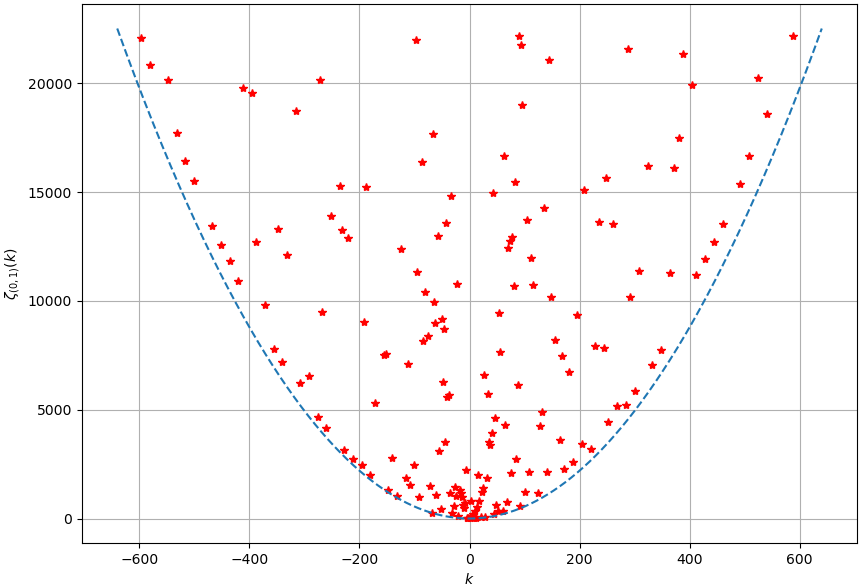}
		\caption{Plot of $\zeta_{(0,1)}(k)$}\label{Fig2}
	\end{figure}
	
\newpage
	
	\begin{figure}[!hbt]\centering
		\includegraphics[height=3in,width=5in]{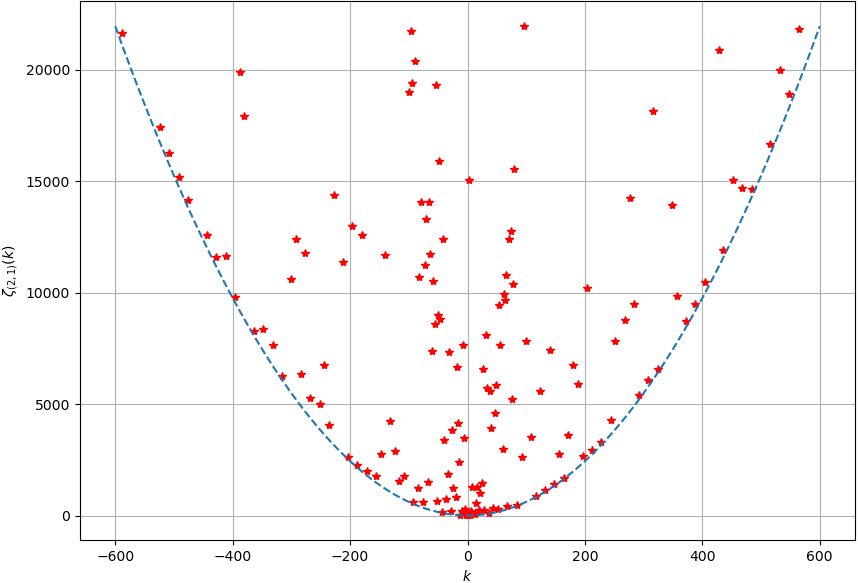}
		\caption{Plot of $\zeta_{(2,1)}(k)$}\label{Fig3}
	\end{figure}

	\begin{figure}[!hbt]\centering
		\includegraphics[height=3in,width=5in]{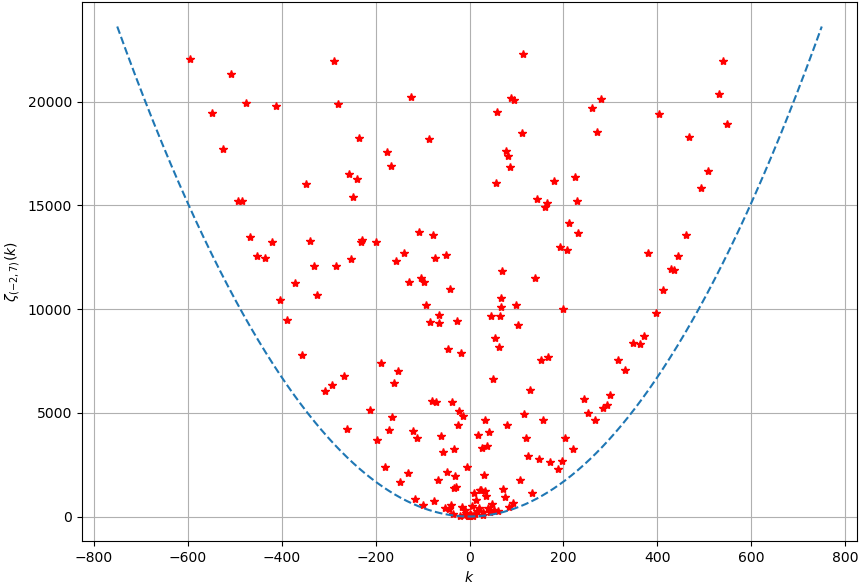}
		\caption{Plot of $\zeta_{(-2,7)}(k)$}\label{Fig4}
	\end{figure}
	
	\vspace{1cm}
	
	\begin{figure}[!hbt]\centering
		\includegraphics[height=3in,width=5in]{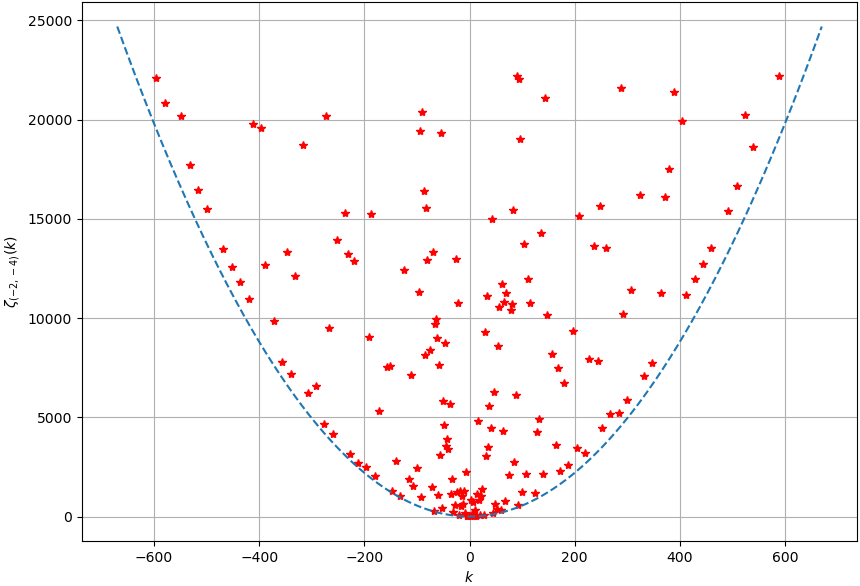}
		\caption{Plot of $\zeta_{(-2,-4)}(k)$}\label{Fig5}
	\end{figure}
	\newpage

	Most of the results in Section~\ref{sec3} have assumptions that $0\in \chi(a,b)$ but did not explain the existence of $0$-PL prime relative to $(a,b)$. Assume that $$\vartheta_{(a,b)}(k)=\{p:\text{$p$ is a $k$-PL prime relative $(a,b)$}\}$$ 
	where $k\in\chi(a,b)$. Table~\ref{tab3} displays the first ten elements of $\vartheta_{(a,b)}(0)$ for some integers $a$ and $b$.
	\begin{table}[h]\centering
		\begin{tabular}{|c|c|}
			\hline
			$(a,b)$& First ten elements of $\vartheta_{(a,b)}(0)$\\\hline
			$(0,1)$&$11, 29, 31, 71, 131, 191, 229, 251, 271, 281$\\\hline
			$(2,1)$&$5, 13, 37, 53, 61, 109, 149, 151, 157, 173$\\\hline
			$(5,12)$&$11, 13, 19, 31, 37, 47, 61, 101, 107, 109$\\\hline
			$(-9,-21)$&$13, 19, 29, 37, 47, 53, 61, 71, 79, 107$\\\hline
			$(-23,-20)$&$31, 47, 59, 61, 71, 107, 109, 149, 173, 191$\\\hline
			$(-232,-200)$&$13, 19, 47, 59, 61, 71, 101, 109, 173, 179$\\\hline
			$(80,21)$&$13, 19, 29, 31, 37, 47, 79, 149, 157, 173$\\\hline
			$(-3,-7)$&$13, 19, 29, 31, 37, 47, 53, 61, 71, 101, 107$\\\hline
			$(17,102)$&$19, 31, 37, 47, 59, 61, 71, 79, 101, 107$\\\hline
			$(6,7)$&$11, 19, 37, 47, 59, 61, 101, 107, 109, 157$\\\hline
		\end{tabular}
		\caption{First ten elements of $\vartheta_{(a,b)}(0)$ for some choice of $a$ and $b$}\label{tab3}
	\end{table}
	
	Suppose that 
	$$
	\varpi_{(a,b)}^n(k)=\{p:\text{ $p$ is a $k$-PL prime relative to $(a,b)$ with $p\leq n$}\}
	$$
	and let 
	$$
	\omega_{(a,b)}^n(k)=|\varpi_{(a,b)}^n(k)|,
	$$
	where $k\in\chi(a,b)$. Obviously,
	$$
	\lim_{n\to \infty}\omega_{(a,b)}^n(k)=|\vartheta_{(a,b)}(k)|
	$$
	for every $k\in\chi(a,b)$. Take $k=0$. Figure~\ref{Fig6} illustrates the behavior of
	$\omega_{(a,b)}^n(0)$ for the initial values $(a,b)=(0,1),(2,1),(7,6),(-5,-2),(-8,-2)$,
	showing a consistent increase in the number of $0$-PL primes as $n$ grows.
	In the classical Fibonacci sequence case $(0,1)$, $\omega_{(0,1)}^n(0)$ increases
	steadily, indicating that $0$-PL primes occur frequently among odd primes.
	A similar unbounded growth is observed for the classical Lucas sequence case $(2,1)$.
	Moreover, for the initial values $(a,b)=(7,6),(-5,-2),(-8,-2)$, the growth of
	$\omega_{(a,b)}^n(0)$ appears to be faster compared to the classical cases $(0,1)$ and
	$(2,1)$. These numerical observations collectively motivate the formulation of
	Conjecture~\ref{con3}, presented as follows:
	
	\begin{conjecture}\normalfont\label{con3}
		Suppose that $a$ and $b$ are integers with $(a,b)\neq (0,0)$. If $0\in\chi(a,b)$, then $|\vartheta_{(a,b)}(0)|=\aleph_0$. 
	\end{conjecture}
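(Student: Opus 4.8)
The plan is to recast the defining bias as a single character sum and then study how often it vanishes. Set $\sigma_p=\sum_{\xi=0}^{\pi_p(a,b)-1}(F_\xi/p)$. Each term contributes $+1$, $0$, or $-1$ according to whether $\xi$ lies in $\Lambda_1^p(a,b)$, $\Lambda_0^p(a,b)$, or $\Lambda_{-1}^p(a,b)$, so $\sigma_p=|\Lambda_1^p(a,b)|-|\Lambda_{-1}^p(a,b)|$. Comparing with $(\ref{eq2})$, the prime $p$ is $0$-PL relative to $(a,b)$ precisely when $\sigma_p=|\Lambda_0^p(a,b)|$; equivalently, by Proposition~\ref{pro1}, when $|\Lambda_1^p(a,b)|=\tfrac{1}{2}\pi_p(a,b)$. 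The first step, then, is to reduce the conjecture to the assertion that this exact identity holds for infinitely many odd primes, the hypothesis $0\in\chi(a,b)$ guaranteeing that it holds at least once (so that $\vartheta_{(a,b)}(0)\neq\varnothing$ and the dichotomy ``finite versus $\aleph_0$'' is genuine).

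The second step is to make $\sigma_p$ explicit through the algebraic structure of the $(a,b)$-Fibonacci sequence. The orbit $(F_\xi,F_{\xi+1})$ is governed by the powers of the companion matrix $M=\bigl(\begin{smallmatrix}0&1\\1&1\end{smallmatrix}\bigr)$ acting on $(a,b)$ over $\mathbb{F}_p$, and $\pi_p(a,b)$ is the least period of this orbit. In the split case, where $x^2-x-1$ has roots $\alpha,\beta\in\mathbb{F}_p$ with $\alpha\beta=-1$, I would use a Binet-type expansion to write $F_\xi$ as $\alpha^{-\xi}(\alpha^{2\xi}-(-1)^\xi)$ times a fixed scalar; by Theorem~\ref{multiplicative} the factor $(F_\xi/p)$ then splits as a fixed Legendre symbol times $((\alpha^{2\xi}-(-1)^\xi)/p)$, turning $\sigma_p$ into a Legendre-weighted sum over the cyclic subgroup $\langle\alpha\rangle\le\mathbb{F}_p^\times$. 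The non-split case would be handled analogously inside $\mathbb{F}_{p^2}$. The aim is to identify $\sigma_p-|\Lambda_0^p(a,b)|$ with an evaluable character sum whose vanishing is a Frobenius-type condition, after which Chebotarev's density theorem (or Dirichlet's theorem, in the simplest congruence sub-case) would force infinitely many primes to satisfy it. Here Proposition~\ref{pro2} is a useful model, since it already controls $|\Lambda_0^p(0,1)|$ through the order of apparition.

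The hard part, which I expect to be the decisive obstacle, is that the target condition is an \emph{exact} equality $\sigma_p=|\Lambda_0^p(a,b)|$ of an integer-valued character sum, not a containment in an interval. Standard analytic and sieve methods can plausibly control the mean and variance of the discrepancy $\sigma_p-|\Lambda_0^p(a,b)|$ over primes $p$ (numerically it stays small, consistent with the quadratic lower bounds observed for $\zeta_{(a,b)}$), and hence suggest that $0$ is attained often; but pinning a fluctuating character sum to a prescribed value infinitely often is a rigid requirement akin to exact-vanishing and sign-change problems for character sums, which lie beyond current technology. For this reason I anticipate the realistic routes are either (a) isolating a provable infinite congruence subfamily on which an explicit involution $\xi\mapsto\xi'$ of the period exchanges quadratic residues with nonresidues among the $F_\xi$, forcing $k=0$ identically and then invoking Dirichlet's theorem, or (b) a proof conditional on a suitable equidistribution hypothesis for the relevant Frobenius classes. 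An unconditional proof of the full statement appears out of reach, which is precisely why it is recorded as Conjecture~\ref{con3} rather than a theorem.
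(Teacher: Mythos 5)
There is no proof in the paper for you to match: the statement is Conjecture~\ref{con3}, and the paper supports it only with numerical evidence (the lists of $0$-PL primes in Table~\ref{tab3} and the growth of $\omega_{(a,b)}^n(0)$ in Figure~\ref{Fig6}), together with an equivalent reformulation in the classical case, namely the corollary following Conjecture~\ref{con3}: $|\vartheta_{(0,1)}(0)|=\aleph_0$ if and only if there are infinitely many odd primes $p$ with $|\Lambda_{-1}^p(0,1)|=\frac{\pi_p(0,1)-2\epsilon}{2}$ for some $\epsilon\in\{1,2,4\}$. The definite part of your proposal is correct and is essentially this same move: with $\sigma_p=\sum_{\xi=0}^{\pi_p(a,b)-1}(F_\xi/p)=|\Lambda_1^p(a,b)|-|\Lambda_{-1}^p(a,b)|$, the condition $k=0$ is equivalent to $\sigma_p=|\Lambda_0^p(a,b)|$, and by Proposition~\ref{pro1} to $|\Lambda_1^p(a,b)|=\frac{1}{2}\pi_p(a,b)$; this is the paper's reformulation expressed through $|\Lambda_1^p|$ rather than $|\Lambda_{-1}^p|$. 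So what you have actually established goes no further than what the paper itself records.

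Everything after that is a research plan, not a proof, and the gap is the one you yourself identify: the target is an exact integer equality holding for infinitely many primes, and neither Chebotarev nor Dirichlet applies, because $\sigma_p=|\Lambda_0^p(a,b)|$ is not a splitting condition in any fixed Galois extension --- it depends on the order of the root $\alpha$ in $\mathbb{F}_p^\times$ (equivalently on $\pi_p(a,b)$), a quantity that varies with $p$ in a way no fixed Frobenius datum captures. Two further concrete problems: first, your Binet expansion $F_\xi=c\,\alpha^{-\xi}(\alpha^{2\xi}-(-1)^\xi)$ is the $(0,1)$ case; for general $(a,b)$ one has $F_\xi=\alpha^{-\xi}(A\alpha^{2\xi}+B(-1)^\xi)$ with two constants determined by $a,b$, so the inner character sum carries an extra parameter. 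Second, your fallback route (a) --- a congruence subfamily on which an involution forces $k=0$ identically --- is contradicted by the paper's own data: in Table~\ref{tab3} the primes $11$, $131$, $251$ are $0$-PL relative to $(0,1)$, yet $211\equiv 11\pmod{40}$ is prime, smaller than $229$, and absent from the list of the first ten $0$-PL primes, so membership in $\vartheta_{(0,1)}(0)$ is not determined by any congruence class modulo $40$ (and the spread of residues modulo $8$ and $10$ in that table points the same way for other moduli). Your closing verdict --- that the statement is genuinely open --- agrees with the paper's decision to state it as a conjecture; but as a proof attempt, only the reduction is sound, and the conjecture remains unproved.
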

	
	Corollary~\ref{cor3} provides an equivalent formulation of Conjecture~\ref{con3} for the classical Fibonacci sequence case:
	\begin{corollary}\normalfont
		The cardinality $|\vartheta_{(0,1)}(0)|=\aleph_0$ if and only if there are infinitely many odd primes $p$ with $|\Lambda_{-1}^p(0,1)|=\frac{\pi_p(0,1)-2\epsilon}{2}$ for some $\epsilon\in\{1,2,4\}$. 
	\end{corollary}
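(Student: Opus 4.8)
The plan is to obtain the statement as a direct specialization of Corollary~\ref{cor3} to the case $k=0$, combined with an elementary cardinality observation. First I would recall that $\vartheta_{(0,1)}(0)$ is by definition the set of all odd primes $p$ that are $0$-PL primes relative to $(0,1)$. Since this set is a subset of the set of odd primes, which is countable, its cardinality is either finite or $\aleph_0$; hence $|\vartheta_{(0,1)}(0)|=\aleph_0$ holds precisely when $\vartheta_{(0,1)}(0)$ is infinite, that is, when there are infinitely many $0$-PL primes relative to $(0,1)$.

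Next I would invoke Corollary~\ref{cor3} with $k=0$. That corollary characterizes, for each fixed $k$, exactly which odd primes are $k$-PL primes relative to $(0,1)$: namely, $p$ is such a prime if and only if $|\Lambda_{-1}^p(0,1)|=\frac{\pi_p(0,1)-k-2\epsilon}{2}$ for some $\epsilon\in\{1,2,4\}$. Substituting $k=0$ shows that membership of $p$ in $\vartheta_{(0,1)}(0)$ is equivalent to the condition $|\Lambda_{-1}^p(0,1)|=\frac{\pi_p(0,1)-2\epsilon}{2}$ for some $\epsilon\in\{1,2,4\}$.

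Finally I would combine the two observations: since $\vartheta_{(0,1)}(0)$ coincides with the set of odd primes satisfying the displayed condition, it is infinite if and only if that condition holds for infinitely many odd primes. Together with the first paragraph, this yields the desired equivalence. I do not expect a genuine obstacle here, as the result is a formal consequence of Corollary~\ref{cor3}; the only point requiring any care is the reduction of ``$|\vartheta_{(0,1)}(0)|=\aleph_0$'' to ``$\vartheta_{(0,1)}(0)$ is infinite,'' which relies solely on the countability of the odd primes.
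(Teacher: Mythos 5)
Your proposal is correct and follows exactly the route the paper intends: the corollary is stated as an immediate specialization of Corollary~\ref{cor3} to $k=0$, which characterizes membership in $\vartheta_{(0,1)}(0)$ by the displayed condition on $|\Lambda_{-1}^p(0,1)|$. Your additional remark that ``$|\vartheta_{(0,1)}(0)|=\aleph_0$'' reduces to ``$\vartheta_{(0,1)}(0)$ is infinite'' via countability of the odd primes is a point the paper leaves implicit, and it is handled correctly.
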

	
	\begin{figure}[!hbt]\centering
		\includegraphics[height=3.5in,width=5in]{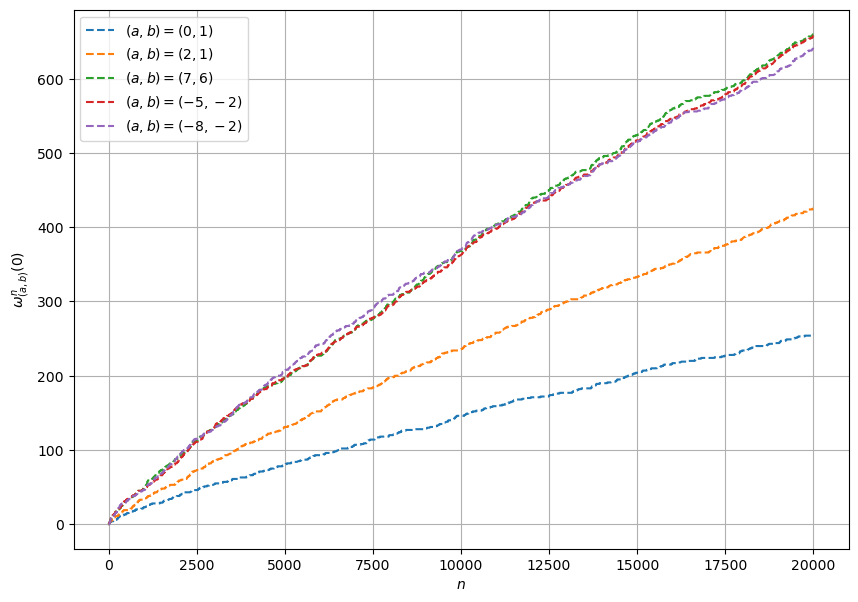}
		\caption{Plot of $\omega_{(a,b)}^n(0)$ for different values of $(a,b)$ with $0\leq n\leq 20000$}\label{Fig6}
	\end{figure}

	\section{Open Problems and Future Work}
	
	\hspace{1cm}In this paper, we have investigated the $(a,b)$-Fibonacci-Legendre cordial labeling of
	path graphs, star graphs, and wheel graphs, as well as graphs obtained through the graph
	operations union, join, corona, lexicographic product, cartesian product, tensor product,
	and strong product. These results demonstrate how the arithmetic structure induced by the
	$(a,b)$-Pisano period and the distribution of Legendre symbols of $(a,b)$-Fibonacci
	numbers modulo an odd prime $p$ can be exploited to obtain balanced edge labelings.
	
	The graph families considered in this study represent only a limited portion of known graphs. It is therefore natural to extend the study of $(a,b)$-Fibonacci-Legendre cordial
	labeling to other graphs not covered in this work, such as complete graphs, complete
	multipartite graphs, fan graphs, and other families of graphs. Studying these graphs may
	reveal new structural constraints or arithmetic phenomena arising from higher graph
	density and symmetry.
	
	The results in Section~\ref{sec3} rely fundamentally on the existence and arithmetic behavior of
	$k$-PL primes relative to $(a,b)$. This leads to the following open problems, which are directly motivated by the conjectures and numerical observations
	presented in Section~\ref{sec4}.
	
	\begin{question}\normalfont
		Is it possible for the constant number $c$ in Conjecture~\ref{con1} to have a lower bound of $0.01$ and an upper bound of $0.1$?
	\end{question}
	
	\begin{question}\normalfont
		Suppose that $a$ and $b$ are integers with $(a,b)\neq (0,0)$. For every $k\in\chi(a,b)$, is $|\vartheta_{(a,b)}(k)|=\aleph_0$?
	\end{question}
	
	Let $a_1$, $a_2$, $b_1$, and $b_2$ be integers with $(a_1,b_1)\neq (a_2,b_2)$, $(a_1,b_1)\neq (0,0)$, and $(a_2,b_2)\neq (0,0)$.
	
	\begin{question}\normalfont
		Are there values of $a_1$, $a_2$, $b_1$, and $b_2$ for which $\chi(a_1,b_1)=\chi(a_2,b_2)$?
	\end{question}
	
	\begin{question}\normalfont
		Are there integers $k_1\in\chi(a_1,b_1)$ and $k_2\in \chi(a_2,b_2)$ such that $\vartheta_{(a_1,b_1)}(k_1)=\vartheta_{(a_2,b_2)}(k_2)$?
	\end{question}
	
	\begin{question}\normalfont
		For a fixed positive integer $n$, are there integers $k_1\in\chi(a_1,b_1)$ and $k_2\in \chi(a_2,b_2)$ such that $\varpi_{(a_1,b_1)}^n(k_1)=\varpi_{(a_2,b_2)}^n(k_2)$?
	\end{question}
	
	These open problems and conjectures (presented in Section~\ref{sec4}) suggest several promising directions for future
	research. Progress on these questions may be achieved through a deeper analysis of the
	arithmetic properties of $(a,b)$-Pisano periods and the distribution of the Legendre symbols
	of $(a,b)$-Fibonacci numbers modulo primes. Further computational evidence, combined with
	analytic and probabilistic methods in number theory, may lead to partial or complete
	resolutions of the questions and conjectures.

\end{document}